\numberwithin{equation}{section}
\theoremstyle{plain}
\providecommand{\U}[1]{\protect\rule{.1in}{.1in}}
\newtheorem{theorem}{Theorem}[section]
\newtheorem{algorithm}[theorem]{Algorithm}
\newtheorem{definition}[theorem]{Definition}
\newtheorem{proposition}[theorem]{Proposition}
\newtheorem{remark}{Remark}[section]
\begin{document}
\begin{frontmatter}
\title{Bootstrap uniform central limit theorems for~Harris recurrent Markov chains \thanksref{T1}}
\thankstext{T1}{Author is a beneficiary of the French Government scholarship Bourse Eiffel, managed by Campus France.}
\runtitle{Bootstrap uniform CLTs for Harris recurrent Markov chains}
\author{\fnms{Gabriela} \snm{Cio\l{}ek}\ead[label=e1]{gabrielaciolek@gmail.com}}
\address{AGH University of Science and Technology,\\ al. Mickiewicza 30, 30-059 Krakow, Poland \\ Modal'X, Universit\'{e} Paris Ouest Nanterre la D\'{e}fense, \\200 Avenue de la R\'{e}publique, 92000 Nanterre, France \\ \printead{e1}}
\runauthor{G. Cio\l{}ek}
\begin{keyword}[class=MSC]
\kwd[Primary ]{62G09}
\kwd[; secondary ]{62G20}
\kwd{60J05}
\end{keyword}
\begin{keyword} \kwd{Bootstrap, Markov chains, regenerative processes,
Nummelin splitting technique, empirical processes indexed by classes of
functions, entropy, robustness, Fr\'{e}chet differentiability }\end{keyword}
\begin{abstract}
 The~main objective of this paper is to establish bootstrap uniform functional central limit theorem for~Harris recurrent Markov chains over uniformly bounded classes of~functions. We show that~the~result can be generalized also to~the~unbounded case. To~avoid some complicated mixing conditions, we make use of the well-known regeneration properties of~Markov chains. We show that in~the~atomic case the~proof of~the~bootstrap uniform central limit theorem for Markov chains for functions dominated by a function in~$L^{2}$ space proposed by Radulovi\'{c} (2004) can be significantly simplified. Finally, we prove bootstrap uniform central limit theorems for Fr\'{e}chet differentiable functionals in a Markovian setting. 
\end{abstract}
\end{frontmatter}

\section{Introduction}

\pagenumbering{arabic}
\setcounter{page}{1}%
The~naive bootstrap for indentically distributed and independent random variables introduced by Efron (1979) has gradually evolved and new types of~bootstrap schemes in both i.i.d. and dependent setting were established. A~detailed review of various bootstrap methods such as moving block bootstrap (MBB), nonoverlapping block bootstrap (NBB) or cilcular block bootstrap (SBB) for dependent data can be found in Lahiri (2003). The main idea of~block bootstrap procedures is to resample blocks of observations in~order to~capture the dependence structure of~the~original sample. However, as~indicated by many authors, these procedures struggle with many problems. For instance, popular MBB method requires the~stationarity for observations that usually results in~failure of this method in non-stationary setting (see Lahiri (2003) for~more details). Furthermore, the asymptotic behaviour of~MBB method is highly dependent on~the~estimation of the bias and~of~the~asymptotic variance of~the~statistic of~interest that is a~significant drawback when considering practical applications. Finally, it is noteworthy, that the rate of convergence of~the~MBB distribution is slower than that's of~bootstrap distribution in~the~i.i.d. setting. Moreover, all mentioned block bootstrap procedures struggle with~the problem of the choice of the length of~the~blocks of~data in~order to~reflect the~dependence structure of~the~original sample. 
\par It is rather surprising that the bootstrap theory for Markov chains has been paid relatively limited attention given~the~extensive investigation and~development of various bootstrap methods for both i.i.d. and dependent data. One of~the~first bootstrap results for Markov chains was obtained by Datta and McCormick (1993). The~proposed method relies on~the renewal properties of Markov chains when a (recurrent) state is visited infinitely often. The~idea behind the~procedure is to~resample a~deterministic number of~data blocks which~are corresponding to regeneration cycles. However, the~method proposed by Datta and McCormick is not second-order correct. Bertail and Cl\'{e}men\c{c}on (2007) have proposed the~modification of~this~procedure which~gives the~second-order correctness in~the~stationary case, but fails in~the~nonstationary setting.  Bertail and Cl\'{e}men\c{c}on (2006) have proposed two effective methods for~bootstrapping Markov chains: the Regenerative block bootstrap (RBB) method for~atomic chains and~~Approximate block bootstrap method (ARBB) for~general Harris recurrent Markov chains. The~main idea behind these~procedures is to mimick the~renewal (pseudo-renewal in general Harris case) structure of the chain by~drawing regeneration data blocks, until the~length of the reconstructed bootstrap sample is larger than the length of~the~original data. Blocks before the first and after the last regeneration times are discarded in order to avoid large bias. In the atomic setting,  the~RBB method has the~uniform rate of~convergence  of~order~$O_{\mathbb{P}}(n^{-1})$ which is the~optimal rate of convergence in~the~i.i.d. case. Bertail and~Cl\'{e}men\c{c}on~(2006) have proved the~second-order correctness of~the~ARBB procedure in~the~unstudentized stationary case, the rate of convergence is close to that in the i.i.d. case. It is noteworthy that for both~methods, the~division of the data into blocks is completely data-driven what is a significant advantage in comparison to block bootstrap methods. It is worthy of mention, that in parallel to the paper of Bertail and~Cl\'{e}men\c{c}on~(2006), the Markov chains~bootstrap CLT for the mean under~no~additional assumptions was proposed by~Radulovi\'{c} (2004). 
\par Bootstrap results for Markov chains established by Radulovi\'{c} (2004) and Bertail and Cl\'{e}men\c{c}on (2006)  allow naturally to extend~the bootstrap theory to~empirical processes indexed by~classes of~functions in a~Markovian setting. Radulovi\'{c} (2004) has proved the~bootstrap uniform functional central limit theorem over uniformly bounded classes of functions~$\mathcal{F}$. In~mentioned paper, Radulovi\'{c} considers countable regenerative Markov chains and indicates that with additional uniform entropy condition the bootstrap result can be extended to~the~uncountable case. Gorst-Rasmussen and B\o{}gsted (2009) have proved the~bootstrap uniform central limit theorem over~classes of~functions whose~envelope is in~$L^{2}.$ They have considered regenerative case which~was motivated by~their study of~queuing systems with~abandonment.
\par This paper generalizes the Radulovi\'{c}'s (2004) bootstrap result for~empirical processes for~Markov chains. We establish the bootstrap uniform functional central limit theorem over a~permissible uniformly bounded classes of functions in~general Harris case. We also show that by arguments of~Tsai (1998), the~condition of~the~uniform boundedness of~$\mathcal{F}$ can be weakened and~it is sufficient to~require only that~$\mathcal{F}$ has an~envelope~$F$ in~$L^{2}.$
The proof of~the~bootstrap uniform CLT for Harris recurrent Markov chains is closely related to~the~uniform CLT for~countable atomic Markov chains proposed by~Radulovi\'{c}. Similarly as in~his paper, the main struggle is the random number of pseudo-regeneration blocks. However, using regeneration properties of~Markov chains, it is possible to replace the random number of blocks with~its deterministic equivalent what~simplifies the analysis of~asymptotic properties of~the~studied empirical process. The~arguments from the~proof of~main theorem of~this~paper can be also applied directly to~the proof of bootstrap uniform CLT for atomic Markov chains proposed by~Radulovi\'{c}~(2004). Thus, we can significantly simplify the proof of~the~Radulovi\'{c}'s result and~apply standard probability inequalities for i.i.d. blocks of data to~show the~asymptotic stochastic equicontinuity of~the bootstrap version of~original empirical process indexed by~uniformly bounded class of function.
\par Regenerative properties of Markov chains can be applied in order to extend some concepts in robust statistics from i.i.d. to a~Markovian setting. Martin and Yohai (1986) have shown that, generally, proving that statistics are robust in dependent case is a~challanging task. Bertail and Cl\'{e}men\c{c}on (2006) have defined an influence function and Fr\'{e}chet differentiability on the torus what allowed to extend the notion of robustness from single observations to the blocks of data instead. As shown in~Bertail and Cl\'{e}men\c{c}on (2015), this approach leads directly to~central limit theorems  (and their bootstrap versions) for Fr\'{e}chet differentiable functionals in a~Markovian setting. In our framework, we use the bootstrap asymptotic results for empirical processes indexed by~classes of functions to derive bootstrap uniform central limit theorems for~Fr\'{e}chet differentiable functionals in~a~Markovian case. Interestingly, there is no need to~consider blocks of data as in Bertail and Cl\'{e}men\c{c}on (2015).  We show that the theorems work when~classes of functions are permissible and~uniformly bounded, however, it is easy to weaken the last assumption and impose that~$\mathcal{F}$ has an~envelope in~$L^{2}.$
\par The paper is organized as follows. In section \ref{intro}, we introduce~the notation and~preliminary assumptions for~Markov chains. In~section~\ref{boothar}, we recall briefly some bootstrap methods for Harris recurrent Markov chains and formulate further necessary assumptions for the considered Markov chains. In~section~\ref{main}, we establish the~bootstrap uniform central limit theorem for~Markov chains. We give a~proof for~uniformly bounded classes of~functions and~show how~the~theory can be~easily extended to~the~unbounded case. We indicate that~using regeneration properties of~Markov chains, the~proof~of~uniform bootstrap central limit theorem for~countable chains proposed by~Radulovi\'{c} can be simplified. In section \ref{frech}, the bootstrap uniform central limit theorems for~Fr\'{e}chet differentiable functionals in a Markovian setting are established. We prove that the~central limit theorem holds when~classes of functions are uniformly bounded. Next, we generalize the theory to the unbounded case demanding that~$\mathcal{F}$ has an~envelope in~$L^{2}.$ In the last section, we enclose small appendix with a~proof of the~interesting property used in~the proofs of main asymptotic theorems in the previous section.

\section{Preliminaries}\label{intro}

\par We begin by introducing some notation and recall the~key concepts of the~Markov chains theory (see Meyn \& Tweedie (1996) for a detailed review and~references). For the~reader's convenience we keep our~notation in~agreement with~notation set in~Bertail and Cl\'{e}men\c{c}on (2006). All along this section $\mathbb{I}_{A}$ is the indicator function of the event $A.$

\par Let $ X = (X_{n})_{n\in \mathbb{N}}$ be a homogeneous Markov chain on~a~countably generated state space~$(E, \mathcal{E})$ with transition probability~$\Pi$ and~initial probability~$\nu.$
Note that for any~$B \in \mathcal{E}$ and~$n \in \mathbb{N},$ we have 
$$ X_{0} \sim \nu \text{\;\;and\;\;} \mathbb{P}(X_{n+1} \in B | X_{0}, \cdots, X_{n}) = \Pi(X_{n}, B) \text{\;\;a.s.}$$
In our framework, $\mathbb{P}_{x}$ (resp. $\mathbb{P}_{\nu})$ denotes the~probability measure such that $X_{0} = x$ and $ X_{0} \in E$ (resp. $X_{0} \sim \nu$), and~$\mathbb{E}_{x}\left(  \cdot \right)$ is~the~$\mathbb{P}_{x}$-expectation (resp. $\mathbb{E}_{\nu}\left(  \cdot \right) $ is the $\mathbb{P}_{\nu}$-expectation). In the following, we assume that~$X$ is $\psi$ -irreducible and aperiodic, unless it is specified otherwise.
\par We are particularly interested in the~atomic structure of Markov chains. It is shown by~Nummelin (1978) that any chain that possesses some recurrent properties can be extended to a chain which has an~atom.
\begin{definition}
Assume that $X$ is aperiodic and $\psi$-irreducible. We say that~a~set $A \in \mathcal{E}$ is~an~accessible atom if~for all $x,\;y \in A$ we have $\Pi(x,\cdot) = \Pi(y,\cdot)$ and~$\psi (A) >0.$ In that case we call $X$ atomic.
\end{definition}
\par In our framework, we are interested in the~asymptotic behaviour of positive recurrent Harris  Markov chains. We say~that $X$ is \textit{Harris recurrent} if starting from any~point~$x \in E$ and any set such that $\psi(A) > 0$, we have
$\mathbb{P}_{x}(\tau_{A} < +\infty) = 1.$
Observe that the~property of Harris recurrence ensures that~$X$ visits set~$A$ infinitely often a.s.. It follows directly from the~strong Markov property, that given any initial law $\nu,$  the sample paths can be divided into i.i.d. blocks corresponding to~the~consecutive visitis of~the~chain to~atom~$A.$ The segments of data are of the form:
$$ \mathcal{B}_{j}=(X_{1+\tau_{A}(j)}, \cdots, X_{\tau_{A}(j+1)}),\; j\geq 1$$
and take values in the torus $\cup_{k=1}^{\infty} E^{k}.$
 
\par We define the sequence of~regeneration times~$(\tau_{A}(j))_{j \geq 1}$. The~sequence consists of~the~successive points of~time when the~chain forgets its past. Let
$$ \tau_{A} = \tau_{A}(1) = \inf\{ n \geq 1: X_{n} \in A\}$$ 
be the first time when the chain hits the~regeneration set $A$
and
$$ \tau_{A}(j) = \inf\{ n > \tau_{A}(j-1), X_{n} \in A\} \text{\;for\;} j \geq 2. $$
\par In our framework, we consider steady-state behaviour of Markov chains. One of the crucial stability results of~interest is the Kac's theorem which enables to~write~functionals of~the~stationary distribution~$\mu$~as the functionals of distribution of~a~regenerative block. Indeed, for positive recurrent Markov chain if $\mathbb{E}_{A}(\tau_{A}) < \infty,$ then~the~unique invariant probability distribution $\mu$ is~the~Pitman's occupation measure given by
$$ \mu(B) = \frac{1}{\mathbb{E}_{A}(\tau_{A})} \left( \sum_{i=1}^{\tau_{A}} \mathbb{I} \{X_{i} \in B\}\right)\; \forall B \in \mathcal{E}.$$
\par We introduce few more pieces of notation: throughout the paper we write $l_{n} = \sum_{i=1}^{n} \mathbb{I}\{X_{i} \in A\}$ for the~total number of~consecutive visits of the chain to the atom~$A,$ thus we observe $l_{n}+1$ data blocks. We make the~convention that~$B^{(n)}_{l_{n}} = \emptyset$ when~$\tau_{A}(l_{n}) = n.$ Furthermore, we denote by~$l(B_{j}) = \tau_{A}(j+1) - \tau_{A}(j),\; j\geq 1,$ the~length of~regeneration blocks. Note that the~by~the~Kac's theorem we have that\;$ \mathbb{E}(l(B_{j})) = \mathbb{E}_{A}(\tau_{A}) = \frac{1}{\mu(A)}.$ Consider $\mu-$ integrable function $f:\;E \rightarrow \mathbb{R}.$ By~${u}_{n}(f) = \frac{1}{\tau_{A}(l_{n}) -\tau_{A}(1)}\sum_{i=1}^{n} f(X_{i})$ we denote the~estimator of~the~unknown asymptotic mean~$\mathbb{E}_{\mu}(f(X_{1})).$
\begin{remark}
In order to avoid large bias of the estimators based on the~regenerative blocks we discard the data before the first and after the last pseudo-regeneration times (for more details refer to~Bertail and Cl\'{e}men\c{c}on~(2006), page 693).
\end{remark}

\subsection{General Harris Markov chains and the splitting technique}

In this subsection, we recall the so-called \textit{splitting technique}
introduced in~Nummelin (1978). The~technique allows to
extend the~probabilistic structure of any Harris chain in order to~artificially construct a~regeneration set. In~the~following, unless specified otherwise, $X$ is a general, aperiodic, $\psi$-irreducible chain with transition kernel $\Pi$.
\begin{definition}
We say that a~set~$S \in \mathcal{E}$ is small if there exists a~parameter $\delta > 0,$ a~positive probability measure~$\Phi$ supported by~$S$ and an~integer $m \in N^{*}$ such that
\begin{equation}\label{eq:minorization}
 \forall x\in S,\; A \in \mathcal{E}\;\; \Pi^{m}(x,A) \geq \delta\;\Phi(A),
\end{equation}
where $\Pi^{m}$ denotes the $m$-th iterate of~the~transition probability~$\Pi.$
\end{definition}
\begin{remark}
It is noteworthy that in general case it is not obvious that small sets having positive irreducible measure  exist. Jain and Jamison (1967) showed that they do exist for any irreducible kernel~$\Pi$ under the assumption that the state space is countably generated.
\end{remark}

\par We expand the sample space in order to~define a sequence $(Y_{n})_{n\in
\mathbb{N}}$ of independent r.v.'s with parameter $\delta.$ We define a~joint
distribution $\mathbb{P}_{\nu,\mathcal{M}}$ of $X^{\mathcal{M}}=(X_{n}%
,Y_{n})_{n\in\mathbb{N}}$ . The construction relies on the mixture
representation of $\Pi$ on $S,$ namely $
\Pi(x,A)=\delta\Phi(A)+(1-\delta)\frac{\Pi(x,A)-\delta\Phi(A)}{1-\delta}.$
It can be retrieved by the following randomization of the transition
probability $\Pi$ each time the chain $X$ visits the set $S$. If $X_{n}\in S$ and

\begin{itemize}
\item if $Y_{n}=1$ (which happens with probability $\delta\in\left]
0,1\right[  $), then $X_{n+1}$ is distributed according to the probability
measure $\Phi$, 

\item if $Y_{n}=0$ (that happens with probability $1-\delta$), then $X_{n+1} $
is distributed according to the probability measure $(1-\delta)^{-1}(\Pi
(X_{n},\cdot)-\delta\Phi(\cdot)).$
\end{itemize}

\par This bivariate Markov chain $X^{\mathcal{M}}$ is called the \textit{split
chain}. It takes its values in $E\times\left\{  0,1\right\}  $ and possesses
an atom, namely $S\times\left\{  1\right\}  $. The split chain~$X^{\mathcal{M}}$ inherits all the stability and communication properties of~the chain~$X.$  The~regenerative blocks of~the split chain are i.i.d. (in~case m = 1 in \eqref{eq:minorization}). If the~chain $X$ satisfies $\mathcal{M}(m, S, \delta, \Phi)$ for~$m > 1,$ then~the~blocks of~data~are $1$-dependent, however, it is easy to adapt the theory from the~case when~$m=1$ (see for instance Levental (1988)).
\subsection{Regenerative blocks for dominated families}
Throughout the rest of the paper, the minorization condition $\mathcal{M}$ is fulfilled with
$m=1,$ unless specified otherwise. We assume that the family of the
conditional distributions $\{\Pi(x,dy)\}_{x\in E}$ and the initial
distribution $\nu$ are dominated by~a~$\sigma$-finite measure $\lambda$ of
reference, so that  $\nu(dy)=f(y)\lambda(dy)$ and  $\Pi(x,dy)=p(x,y)\lambda
(dy)$, for all $x\in E.$ The~minorization condition requests that
$\Phi$ is absolutely continuous with respect to $\lambda$  and
that$\;p(x,y)\geq\delta\phi(y),$ $\lambda(dy)$ a.s. for any $x\in S,$ with
$\Phi(dy)=\phi(y)dy$. Consider the binary random sequence $Y$ constructed via
the Nummelin's technique from the parameters inherited from condition $\mathcal{M}$. We want to~approximate the Nummelin's construction. Note that the distribution of $Y^{(n)}=(Y_{1},...,$ $Y_{n})$
conditionally to $X^{(n+1)}=(x_{1},...,x_{n+1}) $ is the tensor product of
Bernoulli distributions given by: for all $\beta^{(n)}=(\beta_{1}%
,...,\beta_{n})\in\left\{  0,1\right\}  ^{n},$ $x^{(n+1)}=(x_{1}%
,...,x_{n+1})\in E^{n+1},$
\[
\mathbb{P}_{\nu}\left(  Y^{(n)}=\beta^{(n)}\mid X^{(n+1)}=x^{(n+1)}\right)
=\prod_{i=1}^{n}\mathbb{P}_{\nu}(Y_{i}=\beta_{i}\mid X_{i}=x_{i},\text{
}X_{i+1}=x_{i+1}),
\]
with, for $1\leqslant i\leqslant n,$ 
\begin{itemize}
\item if $x_{i}\notin S,$ $\mathbb{P}_{\nu}(Y_{i}=1\mid X_{i}=x_{i},$
$X_{i+1}=x_{i+1})=\delta,$

\item if $x_{i}\in S,\ \mathbb{P}_{\nu}(Y_{i}=1\mid X_{i}=x_{i},$
$X_{i+1}=x_{i+1})=\delta\phi(x_{i+1})/p(x_{i},x_{i+1}).$
\end{itemize}
\par Observe that conditioned on $X^{(n+1)}$, from $i=1$ to $n$, $Y_{i}$ is distributed according to~the Bernoulli distribution with parameter $\delta$, unless $X$ has
hit the small set $S$ at time $i$: then, $Y_{i}$ is drawn from~the~Bernoulli distribution with parameter $\delta\phi(X_{i+1})/p(X_{i},X_{i+1}).$
We denote by~$\mathcal{L}^{(n)}(p,S,\delta,\phi,x^{(n+1)})$ this probability
distribution. If we were able to generate $Y_{1},...,$ $Y_{n}$, so that
$X^{\mathcal{M}(n)}=\left(  (X_{1},Y_{1}),...,(X_{n},Y_{n})\right)  $
be a realization of the split chain $X^{\mathcal{M}},$
then we would be able to do the block decomposition of~the~sample path $X^{\mathcal{M}(n)}
$ leading to~asymptotically i.i.d. blocks. Note, that in the above procedure the knowledge about the~transition density~$p(x,y)$ is required in~order to~generate random variables~$(Y_{1}, \cdots, Y_{n}).$ To~deal with~this~problem in~practice, Bertail and Cl\'{e}men\c{c}on~(2006) proposed the~approximating construction of~the~above~procedure. We proceed as follows. We construct an~estimator $p_{n}(x,y)$ of~$p(x,y)$ based on~$X^{(n+1)}$ (and~$p_{n}(x,y)$ satisfies $p_{n}(x,y) \geq \delta \phi(y),\; \lambda(dy)-$a.s. and~$p_{n}(x,y)>0, 1\leq i \leq n).$ Next, we generate random vector $\hat{\mathbb{Y}}_{n} = (\hat{Y}_{1}, \cdots, \hat{Y}_{n})$ conditionally to~$X^{(n+1)}$ from~distribution~$\mathcal{L}^{(n)}(p_{n}, S, \delta, \gamma, X^{(n+1)})$ which~is an~approximation of~the~conditional distribution~$\mathcal{L}^{(n)}(p, S, \delta, \gamma,X^{(n+1)})$ of~$(Y_{1}, \cdots, Y_{n})$ for~given~$X^{(n+1)}.$ 
\par In this setting, we define the~successive hitting times of~$A_{\mathcal{M}} = S \times \{1\}$ as~$\hat{\tau}_{A_{\mathcal{M}}}(i),\; i=1,\cdots, \hat{l}_{n},$ where $\hat{l}_{n} = \sum_{i=1}^{n} \mathbb{I} \{ X_{i} \in S, \hat{Y}_{i} = 1\}$ is the~total number of visits of the split chain to~$A_{\mathcal{M}}$ up to time~$n.$ The approximated blocks are of the form:
\begin{align*} \hat{\mathcal{B}}_{0} = (X_{1}, \cdots, X_{\hat{\tau}_{A_{\mathcal{M}}}(1)}), \cdots, \hat{\mathcal{B}}_{j} = (X_{\hat{\tau}_{A_{\mathcal{M}}}(j)+1}, \cdots, X_{\hat{\tau}_{A_{\mathcal{M}}}(j+1)}), \cdots,
\\
 \hat{\mathcal{B}}_{\hat{l}_{n}-1} = (X_{\hat{\tau}_{A_{\mathcal{M}}}(\hat{l
}_{n}-1)+1}, \cdots, X_{\hat{\tau}_{A_{\mathcal{M}}}(\hat{l}_{n})}),\;\hat{\mathcal{B}}_{\hat{l}_{n}}^{(n)} = (X_{\hat{\tau}_{A_{\mathcal{M}}}(\hat{l}_{n})+1}, \cdots, X_{n+1}).
\end{align*}
Moreover, we denote by $\hat{n}_{A_{\mathcal{M}}} = \hat{\tau}_{A_{\mathcal{M}}}(\hat{l}_{n}) - \hat{\tau}_{A_{\mathcal{M}}}(1) = \sum_{i=1}^{\hat{l}_{n}-1} l(\hat{B}_{j})$
the total~number of~observations after the first and before the~last pseudo-regeneration times.
Let $$\sigma^{2}_{f} = \frac{1}{\mathbb{E}_{A_{\mathcal{M}}}(\tau_{A_{\mathcal{M}}})}\mathbb{E}_{A_{\mathcal{M}}} \left( \sum_{i=1}^{\tau_{A_{\mathcal{M}}}} \{f(X_{i}) - \mu(f) \}^{2}\right)$$ be the~asymptotic variance. 
Furthermore, we set that
$$ \hat{\mu}_{n}(f) = \frac{1}{\hat{n}_{A_{\mathcal{M}}}}\sum_{i=1}^{\hat{l}_{n}-1} f(\hat{B}_{j}), \text{\;\; where\;\;} f(\hat{B}_{j}) = \sum_{i= 1 + \hat{\tau}_{A_{\mathcal{M}}}(j)}^{\hat{\tau}_{A_{\mathcal{M}}}(j+1)} f(X_{i})$$
and
$$ \hat{\sigma}_{n}^{2}(f) = \frac{1}{\hat{n}_{A_{\mathcal{M}}}}   \sum_{i=1}^{\hat{l}_{n}-1} \left\{f(\hat{B}_{i}) - \hat{\mu}_{n}(f)l(\hat{B}_{i})\right\}^{2}.$$
\par We briefly indicate that there exists a~connection between $\alpha$- mixing coefficients and~regeneration times for~Harris recurrent Markov chains.
The~strong~$\alpha$-mixing coefficient between $\sigma$-fields $\mathcal{A}$ and~$\mathcal{B}$ is defined  as
$$ \alpha(\mathcal{A}, \mathcal{B}):= \sup_{(A,B) \in \mathcal{A} \times \mathcal{B}} | \mathbb{P}(A \cap B) - \mathbb{P}(A) \mathbb{P}(B)|.$$
The~strong mixing coefficients related to~a~sequence of~random variables are defined by
$$ \alpha(k) = \sup_{n} \sup_{ A \in \hat{\xi}_{n}} \sup_{B \in \hat{\xi}^{n}} | \mathbb{P}_{\mu}(A \cap B) - \mathbb{P}_{\mu}(A) \mathbb{P}_{\mu}(B)|,$$
where $\hat{\xi}_{n} = \sigma(X_{i}, i \leq n)$ and $ \hat{\xi}^{n} = \sigma(X_{i}, i \geq n).$
By Theorem~2 from~Bolthausen (1982), we know that for~stationary Harris chains if~for~some~$\lambda \geq 0$ the sum $\sum_{m} m^{\lambda} \alpha(m) < \infty,$ then for~all~$B \in \mathcal{E}$ such that $\mu(B) > 0$ we have~$\mathbb{E}_{\mu} (\tau_{B}^{1 + \lambda}) < \infty,$ where~$\tau_{B} = \inf\{ n \geq 1: X_{n} \in~B\}.$
This result guarantees that the~rate of~decay of~strong~mixing coefficients is polynomial. This is a weaker condition, because~usually the~exponential rate of~decay is assumed.
\section{Bootstrap methods for~Harris recurrent Markov chains}\label{boothar}
In this section we recall shortly some bootstrap methods for Harris recurrent Markov chains which are essential to establish our bootstrap versions of uniform central limit theorems for Markov chains. We formulate necessary assumptions which must be satisfied by the chain in order to our theory could work.
\subsection{ARBB method}
In this subsection we recall the~Approximate block bootstrap algorithm (ARBB) introduced by~Bertail and~Cl\'{e}men\c{c}on (2006). The~ARBB method allows to~utilize the~pseudo-regeneration structure of~the~split chain in~order to~generate the~bootstrap blocks $ B_{1}^{*}, \cdots, B_{k}^{*}$ which are obtained by~resampling pseudo-regeneration data blocks~$\hat{B}_{1}, \cdots, \hat{B}_{\hat{l}_{n}-1}.$ The algorithm allows to compute the estimate of~the~sample distribution of~some statistic~$T_{n} = T(\hat{B}_{1}, \cdots, \hat{B}_{\hat{l}_{n}-1})$ with standarization $S_{n} = S(\hat{B}_{1}, \cdots, \hat{B}_{\hat{l}_{n}-1}).$ For the sake of clarity, we recall the~ARBB bootstrap procedure below. The~algorithm proceeds as follows:
\begin{algorithm}[ARBB procedure] \label{ARBB} 
\begin{enumerate}
\item Draw sequentially bootstrap data blocks $ B_{1}^{*}, \cdots, B_{k}^{*}$ (we denote the~length of~the~blocks by~$l(B^{*}_{j}),\; j=1, \cdots, k$) independently from the empirical distribution function
$$ \hat{\mathcal{L}}_{n} = \frac{1}{\hat{l}_{n}-1} \sum_{i=1}^{\hat{l}_{n}-1} \delta_{\hat{B}_{i}},$$ where~$\hat{B}_{i},\;i= 1,\cdots, \hat{l}_{n}-1$ are initial pseudo-regeneration blocks. We generate the~bootstrap blocks until the~joint length of~the~bootstrap blocks~$l^{*}(k) = \sum_{i=1}^{k} l(B_{i}^{*})$  exceeds~$n.$ We set $\l_{n}^{*} = \inf\{ k: l^{*}(k) > n\}.$
\item Bind the~bootstrap blocks from the step 1 and~construct the~ARBB bootstrap sample $ X^{*(n)} = (X_{1}^{*}, \cdots, X^{*}_{l_{n}^{*}-1}).$
\item Compute the~ARBB statistic and its~ARBB distribution, namely
$ T^{*}_{n} = T(X^{*(n)}) = T(B_{1}^{*}, \cdots, B^{*}_{l_{n}^{*}-1})$ and its standarization  $S^{*}_{n} = S(X^{*(n)})= S(B_{1}^{*}, \cdots, B^{*}_{l_{n}^{*}-1}).$
\item The~ARBB distribution is given by
$$ H_{ARBB}(x) = \mathbb{P}^{*}(S_{n}^{
*-1}(T_{n}^{*} - T_{n}) \leq x), $$
where~$\mathbb{P}^{*}$ is the~conditional probability given~the~data.
\end{enumerate}
\end{algorithm}
\par We introduce few more pieces of~notation. We denote by
$$ n^{*}_{A_{\mathcal{M}}} = \sum_{i=1}^{l_{n}^{*}-1} l(B_{j}^{*})$$ the length of~the~bootstrap sample, 
$$ \mu_{n}^{*}(f) = \frac{1}{n^{*}_{A_{\mathcal{M}}}} \sum_{i=1}^{l_{n}^{*} - 1} f(B_{i}^{*}) \text{\;\;and\;\;} \sigma_{n}^{*2}(f) = \frac{1}{n^{*}_{A_{\mathcal{M}}}} \sum_{i=1}^{l_{n}^{*} - 1} \{ f(B_{i}^{*}) - \mu_{n}^{*}(f)l(B_{j}^{*})\}^{2}.$$
\subsection{Preliminary bootstrap results for Markov chains}
\par Let $(X_{n})$ be~a~positive recurrent Harris Markov chain and~$(\alpha_{n})_{n \in \mathbb{N}}$ be a~sequence of~nonnegative numbers that converges to~zero. We impose the~following assumptions on~the~chain (compare with~Bertail and Cl\'{e}men\c{c}on (2006), page 700):
\begin{enumerate}
    \item $S$ is chosen so that $\inf_{x \in S} \phi(x) > 0$.
\item Transition density $p$ is estimated by~$p_{n}$ at the~rate $ \alpha_{n}$ (usually we consider $\alpha_{n} = \frac{\log(n)}{n}$) for the~mean squared error (MSE) when error is measured by~the~$L^{\infty}$ loss over~$S^{2}.$
\end{enumerate}
\par Moreover, we assume the~following conditions (for~a~comprehensive treatment on~these assumptions the~interested reader may refer to~Bertail and Cl\'{e}men\c{c}on (2006)). Let $k \geq 2$ be a real number. \\
$\mathcal{H}_{1}(f, k, \nu).$ The small set~$S$ is such that
$$ \sup_{x \in S} \mathbb{E}_{x} \left[ \left(\sum_{i=1}^{\tau_{S}} |f(X_{i})|\right)^{k}\right] < \infty$$
and
$$ \mathbb{E}_{\nu} \left[ \left(\sum_{i=1}^{\tau_{S}} |f(X_{i})|\right)^{k}\right] < \infty. $$
$\mathcal{H}_{2}(k, \nu).$ The set $S$ is such $ \sup_{x \in S} \mathbb{E}_{x}(\tau_{S}^{k}) < \infty$ and~$\mathbb{E}_{\nu}(\tau_{S}^{k}) < \infty.$
\newline
$\mathcal{H}_{3}. \;\; p(x,y)$ is estimated by $p_{n}(x,y)$ at the rate~$\alpha_{n}$ for the~MSE when error is measured by~the~$L^{\infty}$ loss over~$ S\times S:$
$$ \mathbb{E}_{\nu} \left( \sup_{(x,y) \in S \times S} | p_{n}(x,y) - p(x,y)|^{2} \right) = O(\alpha_{n}), \text{\;\; as\;} n \rightarrow \infty.$$
$\mathcal{H}_{4}.$ The density $\phi$ is such that $\inf_{x \in S} \phi(x) > 0.$
\newline
$\mathcal{H}_{5}.$ The transition density $p(x,y)$ and its estimate $p_{n}(x,y)$ are bounded by a constant $ R < \infty$ over~$S^{2}.$
\begin{remark}
In the following, we assume that~$\alpha_{n} \sim (\frac{\log(n)}{n})^{s/s+1}$ (see Bertail and Cl\'{e}men\c{c}on (2006) for more details).
\end{remark}
\par Before we establish main result of this paper we recall two theorems from~Bertail and Cl\'{e}men\c{c}on (2006) that essentially establish the~consistency of~ARBB procedure for~pseudo-regeneration blocks. 
\begin{theorem}\label{3.1}
Assume that the conditions $[1]$ and $[2]$ are satisfied by the~chain and $\mathcal{H}_{1}(f, \rho, \nu),\; \mathcal{H}_{2}(\rho, \nu)$ with~$ \rho \geq 4,\; \mathcal{H}_{3}, \mathcal{H}_{4}$ and~$\mathcal{H}_{5}$ hold. Then, as $n \rightarrow \infty$ we have
$$ \hat{\sigma}^{2}_{n}(f) \rightarrow \sigma^{2}_{f} \text{\;\; in\;} \mathbb{P}_{\nu}-\text{probability}$$
and
$$ \hat{n}^{1/2}_{A_{\mathcal{M}}}\frac{\hat{\mu}_{n}(f) - \mu(f)}{\hat{\sigma}_{n}(f)} \rightarrow \mathcal{N}(0,1) \text{\;\; in distribution under\;}\mathbb{P}_{\nu}.$$
\end{theorem}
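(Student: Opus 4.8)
\noindent\emph{Sketch of the argument.} The plan is to compare the observable pseudo-regeneration blocks $\hat{B}_{j}$ with the \emph{true} regeneration blocks $B_{j}$ of the split chain $X^{\mathcal{M}}$, which are i.i.d.\ when $m=1$, and to treat separately the classical regenerative limit theory for the latter and the error caused by using the estimated density $p_{n}$ in place of $p$ when generating the latent variables $\hat{Y}_{i}$. Throughout put $Z_{j}:=f(B_{j})-\mu(f)\,l(B_{j})$; by Kac's theorem $\mathbb{E}[Z_{j}]=0$ and $\sigma_{f}^{2}=\mathrm{Var}(Z_{1})/\mathbb{E}[l(B_{1})]$.

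First I would dispose of the idealized case, in which one works with the true blocks $B_{j}$. The hypotheses $\mathcal{H}_{1}(f,\rho,\nu)$ and $\mathcal{H}_{2}(\rho,\nu)$ with $\rho\geq4$ give $\mathbb{E}|Z_{j}|^{\rho}<\infty$, and (since $\rho>2$) make the discarded first and last blocks negligible, of order $o_{\mathbb{P}_{\nu}}(\sqrt{n})$. The strong law for positive recurrent chains gives $n_{A_{\mathcal{M}}}/l_{n}\to\mathbb{E}[l(B_{1})]=\mathbb{E}_{A_{\mathcal{M}}}(\tau_{A_{\mathcal{M}}})$ a.s.\ and $l_{n}/n\to\mathbb{E}_{A_{\mathcal{M}}}(\tau_{A_{\mathcal{M}}})^{-1}$ a.s.; combined with Anscombe's central limit theorem for randomly indexed sums applied to $n_{A_{\mathcal{M}}}^{-1/2}\sum_{j=1}^{l_{n}-1}Z_{j}=n_{A_{\mathcal{M}}}^{1/2}(\tilde{\mu}_{n}(f)-\mu(f))$ this yields $n_{A_{\mathcal{M}}}^{1/2}(\tilde{\mu}_{n}(f)-\mu(f))\Rightarrow\mathcal{N}(0,\sigma_{f}^{2})$, where $\tilde{\mu}_{n},\tilde{\sigma}_{n}^{2}$ are the statistics built from the true blocks. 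Expanding $\tilde{\sigma}_{n}^{2}(f)$ and applying the strong law to $\sum f(B_{i})^{2}$, $\sum f(B_{i})l(B_{i})$ and $\sum l(B_{i})^{2}$ (only second block moments are needed here), together with $\tilde{\mu}_{n}(f)\to\mu(f)$, gives $\tilde{\sigma}_{n}^{2}(f)\to\sigma_{f}^{2}$ in $\mathbb{P}_{\nu}$-probability.

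Next I would transfer these limits to the observable statistics via a coupling of the latent variables, conditionally on $X^{(n+1)}$. Coupling maximally and coordinatewise the product Bernoulli laws $\mathcal{L}^{(n)}(p_{n},S,\delta,\phi,X^{(n+1)})$ and $\mathcal{L}^{(n)}(p,S,\delta,\phi,X^{(n+1)})$, and using $p,p_{n}\geq\delta\phi$ on $S$ together with $\mathcal{H}_{4}$, $\mathcal{H}_{5}$ and $[1]$, one obtains
\[
\mathbb{P}\big(\hat{Y}_{i}\neq Y_{i}\mid X^{(n+1)}\big)\ \leq\ \mathbb{I}\{X_{i}\in S\}\;\frac{\sup_{(x,y)\in S\times S}|p_{n}(x,y)-p(x,y)|}{\delta\,\inf_{x\in S}\phi(x)}\, .
\]
Since $\sum_{j}f(\hat{B}_{j})$ telescopes to $\sum_{i=\hat{\tau}_{A_{\mathcal{M}}}(1)+1}^{\hat{\tau}_{A_{\mathcal{M}}}(\hat{l}_{n})}f(X_{i})$ and $\hat{n}_{A_{\mathcal{M}}}=\hat{\tau}_{A_{\mathcal{M}}}(\hat{l}_{n})-\hat{\tau}_{A_{\mathcal{M}}}(1)$, the estimator $\hat{\mu}_{n}(f)$ depends on the randomization only through the first and last pseudo-regeneration times; each of these is determined by a geometrically small, hence $O_{\mathbb{P}_{\nu}}(1)$, number of visits of $X$ to $S$, so by $\mathcal{H}_{3}$, $[2]$ and $\alpha_{n}\sim(\log n/n)^{s/(s+1)}\to0$ they coincide with the corresponding true split-chain times on an event of probability tending to $1$; on that event $\hat{\mu}_{n}(f)=\tilde{\mu}_{n}(f)$ and $\hat{n}_{A_{\mathcal{M}}}=n_{A_{\mathcal{M}}}$, so the numerator $\hat{n}_{A_{\mathcal{M}}}^{1/2}(\hat{\mu}_{n}(f)-\mu(f))$ inherits the Gaussian limit.

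The genuinely delicate point, which I expect to be the main obstacle, is the consistency $\hat{\sigma}_{n}^{2}(f)\to\sigma_{f}^{2}$: unlike the mean, the empirical block variance does depend on the internal block decomposition, and a single discrepancy $\hat{Y}_{i}\neq Y_{i}$ merges two consecutive true blocks or splits one, altering $\sum_{i}\{f(\hat{B}_{i})-\tilde{\mu}_{n}(f)l(\hat{B}_{i})\}^{2}$ by a cross term of the form $\pm2\,(f(B_{i})-\mu(f)l(B_{i}))(f(B_{i+1})-\mu(f)l(B_{i+1}))$, by additivity of $f(\cdot)$ and $l(\cdot)$ under concatenation. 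Conditionally on $X^{(n+1)}$ the number of discrepancies is stochastically dominated by a $\mathrm{Binomial}$ variable of mean $O_{\mathbb{P}_{\nu}}(n\,\alpha_{n}^{1/2})=o_{\mathbb{P}_{\nu}}(n)$, so only a vanishing proportion of blocks is affected; one then controls the sum of the corresponding cross terms by a second-moment argument over the (data-dependent) set of affected blocks, which is where the fourth-moment hypotheses ($\rho\geq4$, through $\mathcal{H}_{1}(f,4,\nu)$, $\mathcal{H}_{2}(4,\nu)$ and repeated use of Cauchy--Schwarz) are exactly what is needed, yielding $\hat{\sigma}_{n}^{2}(f)-\tilde{\sigma}_{n}^{2}(f)=o_{\mathbb{P}_{\nu}}(1)$ and hence the first assertion. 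Combining the consistency of $\hat{\sigma}_{n}(f)$ with the numerator central limit theorem through Slutsky's lemma then gives the asymptotic standard normality in the second assertion.
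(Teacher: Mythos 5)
The paper does not prove this statement: Theorem \ref{3.1} is recalled verbatim from Bertail and Cl\'{e}men\c{c}on (2006), so there is no in-paper proof to compare against. Your sketch follows essentially the same route as the cited source's argument --- couple the Bernoulli randomizations driven by $p_{n}$ and $p$, reduce to the classical regenerative CLT and LLN for the true split-chain blocks, and use $\mathcal{H}_{3}$--$\mathcal{H}_{5}$ together with the $\rho\geq 4$ moment assumptions to control the effect of the $O_{\mathbb{P}_{\nu}}(n\alpha_{n}^{1/2})$ block discrepancies on $\hat{\sigma}_{n}^{2}(f)$ --- and correctly identifies that the delicate point is the variance estimator rather than the mean.
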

\par Denote by $BL_{1}(\mathcal{F})$ the~set of~all~$1$-Lipschitz bounded functions on~$l^{\infty}(\mathcal{F}).$ We define~the~bounded Lipschitz metric on~$l^{\infty}(\mathcal{F})$ as
$$  d_{BL_{1}}(X,Y) = \sup_{b \in BL_{1}(l^\infty (\mathcal{F}))} | \mathbb{E} b(X) - \mathbb{E}b(Y)|; \;\; X, Y \in l^{\infty}(\mathcal{F}).$$
Convergence in~bounded Lipschitz metric is~correspondent to~weak convergence. Expectations of~nonmeasurable elements are understood as~outer expectations.
\begin{definition}\label{consistency}
We say that~$\mathbb{Z}_{n}^{*}$ is weakly consistent if
$ d_{BL_{1}}( \mathbb{Z}_{n}^{*}, \mathbb{Z}_{n}) \xrightarrow{P} 0.$
Analogously, $\mathbb{Z}_{n}^{*}$ is strongly consistent if
$ d_{BL_{1}}( \mathbb{Z}_{n}^{*}, \mathbb{Z}_{n}) \xrightarrow{\text{\;a.s.}} 0.$
\end{definition}
\begin{theorem}\label{3.2}
Under the~hypotheses of the~Theorem \ref{3.1} , we have the following convergence in~probability under~$\mathbb{P}_{\nu}$:
$$ \Delta_{n} = \sup_{x \in \mathbb{R}} | H_{ARBB}(x) - H_{\nu}(x)| \rightarrow 0, \text{\;\;as\;} n \rightarrow \infty, $$
where
$$ H_{\nu}(x) = \mathbb{P}_{\nu}(x) \left( \hat{n}_{A_{\mathcal{M}}}^{1/2}\sigma^{-1}_{f}( \hat{\mu}_{n}(f) - \mu(f)) \leq x\right)$$
and
$$ H_{ARBB}(x) = \mathbb{P}^{*} \left(n^{* 1/2}_{A_{\mathcal{M}}} \hat{\sigma}_{n}^{-1}(f) ( \mu_{n}^{*}(f) - \hat{\mu}_{n}(f)) \leq x| X^{(n+1)}\right).$$
\end{theorem}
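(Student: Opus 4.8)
The plan is to show that both distribution functions appearing in $\Delta_{n}$ converge uniformly to the c.d.f.\ of $\mathcal{N}(0,1)$, which I denote $\Phi_{0}$, and then conclude by the triangle inequality $\Delta_{n}\le \sup_{x}|H_{ARBB}(x)-\Phi_{0}(x)|+\sup_{x}|\Phi_{0}(x)-H_{\nu}(x)|$. The second term is handled directly by Theorem~\ref{3.1}: it gives $\hat{n}_{A_{\mathcal{M}}}^{1/2}\sigma_{f}^{-1}(\hat{\mu}_{n}(f)-\mu(f))\to\mathcal{N}(0,1)$ in distribution under $\mathbb{P}_{\nu}$, so $H_{\nu}(x)\to\Phi_{0}(x)$ pointwise, and since $\Phi_{0}$ is continuous and monotone, P\'{o}lya's theorem upgrades this to uniform convergence. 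The core of the proof is therefore a conditional central limit theorem for the studentized bootstrapped block sum.

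For this I would first put the ARBB statistic into a form amenable to an i.i.d.\ conditional CLT. Because $\sum_{i=1}^{l_{n}^{*}-1}l(B_{i}^{*})=n^{*}_{A_{\mathcal{M}}}$ by construction, one has the exact identity
\[
\mu_{n}^{*}(f)-\hat{\mu}_{n}(f)=\frac{1}{n^{*}_{A_{\mathcal{M}}}}\sum_{i=1}^{l_{n}^{*}-1}\bigl(f(B_{i}^{*})-\hat{\mu}_{n}(f)\,l(B_{i}^{*})\bigr)=:\frac{1}{n^{*}_{A_{\mathcal{M}}}}\sum_{i=1}^{l_{n}^{*}-1}Z_{i}^{*},
\]
where, conditionally on $X^{(n+1)}$, the $Z_{i}^{*}$ are i.i.d.\ draws from $\hat{\mathcal{L}}_{n}$-based weights, and by the very definitions of $\hat{\mu}_{n}(f)$ and $\hat{\sigma}_{n}^{2}(f)$ they satisfy $\mathbb{E}^{*}[Z_{1}^{*}]=0$ and $\mathbb{E}^{*}[(Z_{1}^{*})^{2}]=(\hat{n}_{A_{\mathcal{M}}}/(\hat{l}_{n}-1))\,\hat{\sigma}_{n}^{2}(f)$. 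Hence the ARBB statistic equals $(n^{*}_{A_{\mathcal{M}}})^{-1/2}\hat{\sigma}_{n}^{-1}(f)\sum_{i=1}^{l_{n}^{*}-1}Z_{i}^{*}$, and it suffices to prove it is asymptotically $\mathcal{N}(0,1)$ conditionally on the data, in $\mathbb{P}_{\nu}$-probability.

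The next step, and in my view the main obstacle, is to dispose of the random number of bootstrap blocks $l_{n}^{*}=\inf\{k:\sum_{i=1}^{k}l(B_{i}^{*})>n\}$, which is a stopping time built from the very block lengths entering the $Z_{i}^{*}$. Following the strategy indicated in the introduction (and in Bertail and Cl\'{e}men\c{c}on (2006), Radulovi\'{c} (2004)), I would first establish the conditional weak laws $l_{n}^{*}/n\to\mu(A_{\mathcal{M}})$ and $n^{*}_{A_{\mathcal{M}}}/n\to 1$ in $\mathbb{P}_{\nu}$-probability; these follow from the conditional renewal structure together with $\hat{n}_{A_{\mathcal{M}}}/(\hat{l}_{n}-1)\to\mathbb{E}_{A_{\mathcal{M}}}(\tau_{A_{\mathcal{M}}})=1/\mu(A_{\mathcal{M}})$ and the moment condition $\mathcal{H}_{2}(\rho,\nu)$, which in particular rules out an atypically large terminal block. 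Then I would replace $l_{n}^{*}$ by its deterministic equivalent $k_{n}=\lfloor n\,\mu(A_{\mathcal{M}})\rfloor$, bounding $|\sum_{i=1}^{l_{n}^{*}-1}Z_{i}^{*}-\sum_{i=1}^{k_{n}}Z_{i}^{*}|$ by Kolmogorov's (or Doob's) maximal inequality for the conditionally i.i.d., conditionally centered $Z_{i}^{*}$; the requisite uniform-in-$n$ conditional second-moment control holds, with $\mathbb{P}_{\nu}$-probability tending to one, thanks to $\mathcal{H}_{1}(f,\rho,\nu)$ and $\mathcal{H}_{2}(\rho,\nu)$ with $\rho\ge 4$ and the consistency $\hat{\mu}_{n}(f)\to\mu(f)$, $\hat{\sigma}_{n}^{2}(f)\to\sigma_{f}^{2}$ from Theorem~\ref{3.1}.

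Finally, for the deterministic index $k_{n}\to\infty$ I would apply the Lindeberg--Feller CLT to the triangular array $\{Z_{i}^{*}\}$ conditionally on the data. The normalized variance $(n^{*}_{A_{\mathcal{M}}}\hat{\sigma}_{n}^{2}(f))^{-1}\mathrm{Var}^{*}\!\big(\sum_{i=1}^{k_{n}}Z_{i}^{*}\big)=k_{n}\,\hat{n}_{A_{\mathcal{M}}}/\big((\hat{l}_{n}-1)\,n^{*}_{A_{\mathcal{M}}}\big)\to 1$ in $\mathbb{P}_{\nu}$-probability by the previous step, and the conditional Lindeberg condition holds because $k_{n}^{-1}\sum_{j=1}^{\hat{l}_{n}-1}|f(\hat{B}_{j})-\hat{\mu}_{n}(f)l(\hat{B}_{j})|^{2+\eta}$ stays bounded for a small $\eta>0$ (again a consequence of $\mathcal{H}_{1},\mathcal{H}_{2}$ with $\rho\ge 4$), which forces the contribution of blocks exceeding $\varepsilon\sqrt{n}$ to vanish. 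This yields $H_{ARBB}(x)\to\Phi_{0}(x)$ in $\mathbb{P}_{\nu}$-probability for every fixed $x$; monotonicity of $H_{ARBB}$ and continuity of $\Phi_{0}$ then promote this to $\sup_{x}|H_{ARBB}(x)-\Phi_{0}(x)|\to 0$ in $\mathbb{P}_{\nu}$-probability via a standard subsequence argument (a probabilistic version of P\'{o}lya's lemma), and combining with the first paragraph gives $\Delta_{n}\to 0$ in $\mathbb{P}_{\nu}$-probability.
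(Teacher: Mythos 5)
First, a point of reference: the paper does not actually prove Theorem~\ref{3.2} --- it is recalled from Bertail and Cl\'{e}men\c{c}on (2006) as an imported consistency result for the ARBB procedure, so there is no in-paper argument to compare yours against line by line. Judged on its own terms, your outline follows the standard route for such statements (triangle inequality through $\Phi_{0}$, P\'{o}lya's lemma for the non-bootstrap half via Theorem~\ref{3.1} and Slutsky, the centering and variance identities for the resampled blocks, an Anscombe-type replacement of the stopping time $l_{n}^{*}$ by a deterministic index, and a conditional Lindeberg--Feller argument), and each of those steps is sound as far as it goes: the identities $\mathbb{E}^{*}[Z_{1}^{*}]=0$ and $\mathbb{E}^{*}[(Z_{1}^{*})^{2}]=(\hat{n}_{A_{\mathcal{M}}}/(\hat{l}_{n}-1))\,\hat{\sigma}_{n}^{2}(f)$ are correct, and your variance normalization does tend to $1$ given the law-of-large-numbers inputs you invoke.

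The genuine gap is that those inputs are precisely what is \emph{not} free here, and your proof never engages with it. The blocks $\hat{B}_{1},\dots,\hat{B}_{\hat{l}_{n}-1}$ from which the bootstrap resamples are not the regeneration blocks of the split chain: they are produced by the approximate Nummelin construction driven by the \emph{estimated} transition density $p_{n}$, so they are neither independent nor identically distributed, and statements such as $\hat{n}_{A_{\mathcal{M}}}/(\hat{l}_{n}-1)\to\mathbb{E}_{A_{\mathcal{M}}}(\tau_{A_{\mathcal{M}}})$, $(\hat{l}_{n}-1)/n\to\mu(A_{\mathcal{M}})$, $\mathbb{E}^{*}\bigl(l(B_{1}^{*})^{2}\mid X^{(n+1)}\bigr)\to\mathbb{E}_{A_{\mathcal{M}}}(\tau_{A_{\mathcal{M}}}^{2})$, or the boundedness of the empirical $(2+\eta)$-moments of $f(\hat{B}_{j})-\hat{\mu}_{n}(f)l(\hat{B}_{j})$ do not follow from $\mathcal{H}_{1}$ and $\mathcal{H}_{2}$ alone. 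Supplying them is the whole point of the coupling argument in Bertail and Cl\'{e}men\c{c}on (2006): one realizes the true split-chain indicators $Y_{i}$ and the approximate ones $\hat{Y}_{i}$ on the same probability space and shows, using $\mathcal{H}_{3}$--$\mathcal{H}_{5}$ (the MSE rate $\alpha_{n}$ for $p_{n}$ over $S^{2}$, $\inf_{x\in S}\phi(x)>0$, and the uniform bound $R$ on $p$ and $p_{n}$), that the expected number of indices at which the two constructions disagree is of order $n\alpha_{n}^{1/2}=o(n)$, so that the empirical distribution $\hat{\mathcal{L}}_{n}$ of the pseudo-blocks is asymptotically indistinguishable from the empirical distribution of the true regeneration blocks, to which your moment and LLN arguments then legitimately apply. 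The fact that your proposal uses none of $\mathcal{H}_{3}$, $\mathcal{H}_{4}$, $\mathcal{H}_{5}$ is a reliable symptom that this step is missing; once it is supplied, the remainder of your argument goes through.
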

\par In the following, the~convergence $X_{n} \xrightarrow{P^{*}} X$ in~$\mathbb{P}_{\nu-}$probability ($\mathbb{P}_{\nu}$ -a.s.) along the sample is understood~as
$$ \mathbb{P}^{*}(|X_{n} - X| > \epsilon|X^{(n+1)}) \xrightarrow{n \rightarrow \infty} 0 \text{\;\; in\;}\mathbb{P}_{\nu} -\text{probability\;\;}(\mathbb{P}_{\nu} \text{ -a.s.}).$$

\section{Uniform bootstrap central limit theorems for~Markov chains}\label{main}
To establish the uniform bootstrap CLT over~permissible, uniformly bounded classes of functions~$\mathcal{F},$ we need to be sure that the size~of~$\mathcal{F}$ is not too large (it is typical requirement when considering~uniform asymptotic results for empirical processes indexed by classes of functions). To control the size of~$\mathcal{F},$ we require the finiteness of~its \textit{covering number}~$N_{p}(\epsilon, Q, \mathcal{F})$ which is interpreted as the minimal number of balls with radius~$\epsilon$ needed to~cover~$\mathcal{F}$ in the norm~$L_{p}(Q)$ and~$Q$ is a measure on~$E$ with finite support. Moreover, we impose the~finiteness of~the~\textit{uniform entropy integral} of~$\mathcal{F},$ namely 
$$ \int_{0}^{\infty} \sqrt{\log N_{2}(\epsilon, \mathcal{F})} d\epsilon < \infty, \text{\;\; where\;\;} N_{2}(\epsilon, \mathcal{F}) = \sup_{Q} N_{2}(\epsilon, Q, \mathcal{F}).$$
\par For the sake of completeness we recall below Theorem 5.9 from~Levental (1988) which~is crucial to~establish uniform bootstrap CLT in~general Harris case.
\begin{theorem}\label{5.9}
Let $(X_{n})$ be a positive recurrent Harris chain taking values in~$(E, \mathcal{E}).$ Let~$\mu$ be the~invariant probability measure for~$(X_{n}).$
Assume further that~$\mathcal{F}$ is a~uniformly bounded class of~measurable ~functions on~$E$ and 
$$  \int_{0}^{\infty} \sqrt{\log N_{2}(\epsilon, \mathcal{F})} d\epsilon < \infty. $$ If $\sup_{x \in A}\mathbb{E}_{x}(\tau_{A})^{2 + \gamma} < \infty  \;\;(\gamma > 0\;\text{fixed}),$
where~$A$ is atomic set for the~chain, then the~empirical process $Z_{n}(f) = n^{1/2} (\mu_{n} - \mu)(f)$ converges weakly as a~random element of~$l^{\infty}(\mathcal{F})$ to a~gaussian process $G$ indexed by~$\mathcal{F}$ whose sample paths are bounded and~uniformly continuous with respect to~the~metric~$L_{2}(\mu).$
\end{theorem}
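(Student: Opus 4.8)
The plan is to use the atomic structure to reduce the uniform central limit theorem for the dependent chain to one for an empirical process built from i.i.d.\ regeneration blocks, and then to appeal to the classical Donsker theory for i.i.d.\ empirical processes, i.e.\ to establish finite-dimensional convergence together with asymptotic stochastic equicontinuity with respect to the $L_2(\mu)$ pseudometric. Because $A$ is an atom and the chain is aperiodic, the blocks $\mathcal B_j=(X_{1+\tau_A(j)},\dots,X_{\tau_A(j+1)})$, $j\ge 1$, are i.i.d.\ with common law $Q_A$ equal to that of $(X_1,\dots,X_{\tau_A})$ under $\mathbb P_A$. Writing $\tilde f(\mathcal B_j):=\sum_{i=1+\tau_A(j)}^{\tau_A(j+1)}(f(X_i)-\mu(f))$, Kac's theorem gives $\mathbb E_A[\tilde f(\mathcal B_1)]=0$ for every $f\in\mathcal F$ and $\mathbb E_A[\tau_A]=\mu(A)^{-1}<\infty$.

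First I would use the regenerative decomposition
\[
n^{1/2}(\mu_n-\mu)(f)=n^{-1/2}\sum_{i=1}^{\tau_A(1)}(f(X_i)-\mu(f))+n^{-1/2}\sum_{j=1}^{l_n-1}\tilde f(\mathcal B_j)+n^{-1/2}\sum_{i=1+\tau_A(l_n)}^{n}(f(X_i)-\mu(f)),
\]
and show the two boundary terms are negligible in $l^\infty(\mathcal F)$: the first because $\tau_A(1)<\infty$ $\mathbb P_\nu$-a.s.\ by Harris recurrence, so $n^{-1/2}\tau_A(1)\to 0$; the third because, $\mathcal F$ being bounded by some $M$, it is at most $Mn^{-1/2}\max_{1\le j\le l_n}l(\mathcal B_j)$, and $\sup_{x\in A}\mathbb E_x[\tau_A^{2+\gamma}]<\infty$ together with $l_n/n\to\mu(A)$ a.s.\ forces $\max_{j\le l_n}l(\mathcal B_j)=o_{\mathbb P}(n^{1/2})$ --- this is exactly the role of the extra moment $\gamma$. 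For the middle term I would replace the random block index $l_n$ by the deterministic $m_n:=\lfloor n\mu(A)\rfloor$ by a uniform-in-$\mathcal F$ Anscombe-type argument, using an Ottaviani--L\'{e}vy maximal inequality for the i.i.d.\ block partial sums and the $L^2$ envelope below; after rescaling by $\sqrt{\mu(A)}$ the problem becomes the weak convergence in $l^\infty(\mathcal F)$ of $\mathbb G_{m_n}f:=m_n^{-1/2}\sum_{j=1}^{m_n}\tilde f(\mathcal B_j)$.

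Next I would check the two ingredients of the i.i.d.\ uniform CLT for the block class $\widetilde{\mathcal F}:=\{\mathcal B\mapsto\tilde f(\mathcal B):f\in\mathcal F\}$ on $(\bigcup_k E^k,Q_A)$. \emph{(i) $L^2$ envelope:} $|\tilde f(\mathcal B)|\le 2M\,l(\mathcal B)=:G(\mathcal B)$ and $\mathbb E_A[G^2]=4M^2\,\mathbb E_A[\tau_A^2]<\infty$; this also yields finite-dimensional convergence, since for $f_1,\dots,f_k\in\mathcal F$ the i.i.d.\ block vectors $(\tilde f_1(\mathcal B_j),\dots,\tilde f_k(\mathcal B_j))$ have finite second moments, so the multivariate Lindeberg CLT applies, with limiting covariance matching that of the claimed Gaussian limit $G$. \emph{(ii) Uniform entropy:} the bound $|\tilde f(\mathcal B)-\tilde g(\mathcal B)|\le\sum_{i\le\tau_A}|f-g|(X_i)+l(\mathcal B)\,|\mu(f-g)|$, together with Cauchy--Schwarz, Kac's theorem, and the uniform bound $M$ (to pass from $L_1(\mu)$ to $L_2(\mu)$), shows that the $L_2(Q_A)$ pseudometric on $\widetilde{\mathcal F}$ is controlled by (a power of) the $L_2(\mu)$ pseudometric on $\mathcal F$, so $\int_0^\infty\sqrt{\log N_2(\epsilon,\widetilde{\mathcal F})}\,d\epsilon<\infty$ follows from the hypothesis on $\mathcal F$. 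With (i) and (ii), the classical i.i.d.\ uniform CLT (Pollard's form) gives asymptotic stochastic equicontinuity of $\mathbb G_{m_n}$ with respect to the $L_2(Q_A)$ pseudometric, hence to $L_2(\mu)$; combined with finite-dimensional convergence over the totally bounded $(\mathcal F,L_2(\mu))$ this gives weak convergence of $\mathbb G_{m_n}$ in $l^\infty(\mathcal F)$ to a Gaussian process with bounded, $L_2(\mu)$-uniformly continuous paths. Feeding the negligible boundary and random-index errors back in gives the statement for $Z_n$.

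The step I expect to be the main obstacle is the asymptotic stochastic equicontinuity in combination with the \emph{unbounded} block envelope $G=2M\,l(\mathcal B)$ and the random number $l_n$ of blocks --- precisely the difficulty highlighted, in the atomic bootstrap context, by Radulovi\'{c}. Concretely, symmetrization reduces matters to bounding $\mathbb E\sup_{\rho(f,g)<\delta}\left|m_n^{-1/2}\sum_{j\le m_n}\varepsilon_j(\tilde f-\tilde g)(\mathcal B_j)\right|$ with i.i.d.\ Rademacher $\varepsilon_j$; since the summands are not uniformly bounded one truncates each block at a level $\lambda$, estimates the truncated part by Dudley's entropy integral (finite by (ii)) and the remainder by $\mathbb E_A[G^2\,\mathbb I\{G>\lambda\}]\le\lambda^{-\gamma}(2M)^{2+\gamma}\,\mathbb E_A[\tau_A^{2+\gamma}]\to 0$, letting $\lambda\to\infty$ after $n\to\infty$ --- this is where the moment hypothesis is used with its full exponent $2+\gamma$. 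Passing from $m_n$ back to the data-driven index $l_n$ needs one further uniform-in-$\mathcal F$ fluctuation bound for the block process. None of these steps is conceptually new, but their combination is what makes the uniform statement delicate.
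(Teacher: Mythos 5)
The paper does not prove this statement: it is quoted verbatim as Theorem 5.9 of Levental (1988) and used as an external ingredient, so there is no in-paper proof to compare against. Your sketch follows essentially the same route as Levental's original argument --- regenerative decomposition into i.i.d.\ blocks, negligibility of the boundary terms, replacement of the random block count $l_n$ by $\lfloor n\mu(A)\rfloor$, and then the i.i.d.\ uniform CLT for the block class $\widetilde{\mathcal F}$ with envelope $2M\,l(\mathcal B)$, with the $2+\gamma$ moment absorbing the unboundedness of the envelope in the truncation step. The outline is sound and you correctly identify the genuinely delicate point (equicontinuity with an unbounded block envelope). The one place where your sketch is thinner than what is actually needed is step (ii): for the uniform-entropy Donsker theorem after symmetrization you must control the covering numbers of $\widetilde{\mathcal F}$ (normalized by the envelope $G$) uniformly over all finitely supported measures $Q$ on the torus $\bigcup_k E^k$, not merely the $L_2(Q_A)$ pseudometric; this transfer does go through --- a discrete measure on blocks induces a weighted discrete measure on $E$ to which the hypothesis $\sup_Q N_2(\epsilon,Q,\mathcal F)<\infty$ applies --- but it requires an explicit argument rather than the single comparison with $Q_A$ that you give.
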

\subsection{Main asymptotic results}
In this subsection we establish the~bootstrap uniform central limit theorem over~permissible, uniformly bounded classes of~functions which~satisfy the~uniform entropy condition.
\begin{theorem}\label{uniformclt}
Suppose that~$(X_{n})$ is positive recurrent Harris Markov chain and the assumptions from the~Theorem \ref{3.2} are satisfied by~$(X_{n}).$ Assume further that $\mathcal{F}$ is a permissible, uniformly bounded class of functions and the following uniformity condition holds
\begin{equation}\label{eq:unicond}
\int_{0}^{\infty} \sqrt{\log N_{2}(\epsilon, \mathcal{F})} d\epsilon < \infty.
\end{equation}
Then the process 
\begin{equation}
\mathbb{Z}^{*}_{n} = n^{* 1/2}_{A_{\mathcal{M}}} \left[ \frac{1}{n^{*}_{A_{\mathcal{M}}}} \sum_{i=1}^{l_{n}^{*}-1} f(B_{i}^{*}) - \frac{1}{\hat{n}_{A_{\mathcal{M}}}} \sum_{i=1}^{\hat{l}_{n}-1} f(\hat{B}_{i})\right]
\end{equation}
converges in probability~under~$\mathbb{P}_{\nu}$ to~a~gaussian process $G$ indexed by~$\mathcal{F}$ whose sample paths are bounded and uniformly continuous with respect to the~metric~$L_{2}(\mu).$
\end{theorem}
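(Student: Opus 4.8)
The plan is to reduce the bootstrap uniform CLT to a combination of (i) a conditional multiplier/exchangeable-weights CLT for the i.i.d.\ block-functionals and (ii) control of the randomness in the number of resampled blocks, exactly mirroring the strategy that works for the univariate statement in Theorem~\ref{3.2} but now carried out at the level of the process indexed by $\mathcal{F}$. First I would pass from the pseudo-regeneration blocks $\hat{B}_i$ to genuinely i.i.d.\ blocks $B_i$ of the split chain: by the coupling estimates underlying Theorem~\ref{3.1} (which use $\mathcal{H}_3$, $\mathcal{H}_4$, $\mathcal{H}_5$ and the estimation rate $\alpha_n$), the difference between the empirical measure built from the $\hat{B}_i$ and from the $B_i$ is $o_{\mathbb{P}_\nu}(\hat{n}_{A_{\mathcal{M}}}^{-1/2})$ uniformly over $\mathcal{F}$, since $\mathcal{F}$ is uniformly bounded; this lets me work conditionally on a truly i.i.d.\ array. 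The block-functionals $f\mapsto f(B_i)$ then form an i.i.d.\ sequence of random elements of $\ell^\infty(\mathcal{F})$, and Levental's Theorem~\ref{5.9} (together with $\mathcal{H}_2(\rho,\nu)$ with $\rho\ge 4$, which gives the required moment $\sup_{x\in A}\mathbb{E}_x(\tau_A)^{2+\gamma}<\infty$) guarantees that this class of block-functionals is $\mu$-Donsker, i.e.\ the centered, normalized block sums converge to the Gaussian process $G$.

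Next I would handle the bootstrap step. Conditionally on the data, drawing $B_1^*,\dots,B_k^*$ i.i.d.\ from $\hat{\mathcal{L}}_n$ is the classical Efron bootstrap of the i.i.d.\ block-functionals; for a class that is Donsker with square-integrable envelope, the conditional bootstrap empirical process converges (in $d_{BL_1}$, in probability) to the same limit $G$ — this is the bootstrap CLT for i.i.d.\ empirical processes (Giné--Zinn / Wellner). The only genuinely Markovian complication is that the number of blocks is the random stopping index $l_n^*=\inf\{k:\ l^*(k)>n\}$ rather than a deterministic count, and similarly $\hat{n}_{A_{\mathcal{M}}}$ is random. I would deal with this exactly as in the paper's description of the main idea: by the strong law for the i.i.d.\ block lengths $l(B_i^*)$ (with mean $\mathbb{E}_{A_{\mathcal{M}}}(\tau_{A_{\mathcal{M}}})=1/\mu(A_{\mathcal{M}})$) one has $l_n^*/n\to\mu(A_{\mathcal{M}})$ and $n^*_{A_{\mathcal{M}}}/n\to 1$ in conditional probability, so $l_n^*$ can be replaced by the deterministic equivalent $\lfloor n\,\mu(A_{\mathcal{M}})\rfloor$ up to a negligible error. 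The substitution is justified by an asymptotic-equicontinuity argument: the uniform-entropy condition \eqref{eq:unicond} gives stochastic equicontinuity of the bootstrap process (via a symmetrization and a maximal inequality / chaining bound for the conditionally i.i.d.\ weighted block sums), so a random index within $o_{\mathbb{P}}(n)$ of the deterministic one changes the process by $o_{\mathbb{P}}(1)$ in sup-norm over $\mathcal{F}$. Finally, the normalization $n^{*1/2}_{A_{\mathcal{M}}}/n^*_{A_{\mathcal{M}}}$ versus $1/\hat{n}_{A_{\mathcal{M}}}$ is reconciled using $n^*_{A_{\mathcal{M}}}/\hat{n}_{A_{\mathcal{M}}}\to 1$, together with the consistency $\hat{\sigma}_n^2(f)\to\sigma_f^2$ already available pointwise from Theorem~\ref{3.1} (only needed here to match scalings, not the full process).

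Putting these pieces together: replace $\hat{B}_i$ by i.i.d.\ $B_i$ (uniformly negligible error); invoke the i.i.d.\ bootstrap CLT for the Donsker class of block-functionals to get conditional weak convergence to $G$ with a deterministic block count; transfer back to the random count $l_n^*$ and the random normalizer $n^*_{A_{\mathcal{M}}}$ using the SLLN for block lengths plus the equicontinuity furnished by \eqref{eq:unicond}; conclude $d_{BL_1}(\mathbb{Z}_n^*,G)\xrightarrow{P_\nu}0$, which is the assertion. I expect the main obstacle to be the rigorous treatment of the random number of bootstrap blocks jointly with the non-measurability inherent in $\ell^\infty(\mathcal{F})$: one must establish the conditional asymptotic equicontinuity of the bootstrap process uniformly well enough to absorb the fluctuation of $l_n^*$, and this is precisely where the permissibility hypothesis (to make outer expectations behave) and the uniform entropy integral enter in an essential way. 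The coupling from $\hat{B}_i$ to $B_i$ in the general Harris case, while standard given $\mathcal{H}_1$--$\mathcal{H}_5$, also requires some care to make uniform over $\mathcal{F}$ rather than for a single $f$.
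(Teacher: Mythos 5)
Your overall skeleton matches the paper's: verify the two Gin\'e--Zinn conditions (finite-dimensional convergence plus conditional asymptotic equicontinuity), use Levental's Theorem~\ref{5.9} for the original process, and replace the random block count $l_n^*$ and normalizer $n^*_{A_{\mathcal{M}}}$ by deterministic equivalents via the law of large numbers for block lengths. The divergence is in how the equicontinuity of the bootstrap process is obtained, and it is there that your proposal has a genuine gap. You route everything through a coupling step: you claim that the empirical measure built from the pseudo-blocks $\hat{B}_i$ differs from the one built from true split-chain blocks $B_i$ by $o_{\mathbb{P}_\nu}(\hat{n}_{A_{\mathcal{M}}}^{-1/2})$ \emph{uniformly over} $\mathcal{F}$, so that the classical i.i.d.\ bootstrap CLT can be applied wholesale. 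But the results you cite in support (the estimates behind Theorems~\ref{3.1} and~\ref{3.2}, conditions $\mathcal{H}_3$--$\mathcal{H}_5$) are one-dimensional: they control the approximation for a single fixed $f$, and upgrading them to a $\sqrt{n}$-rate statement in $\ell^\infty(\mathcal{F})$ is precisely the kind of process-level result that would need its own proof. You flag this as ``requiring some care,'' but as written it is an unproved lemma on which your whole reduction rests.

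The paper avoids needing any such uniform coupling. It only invokes the one-dimensional approximation results where one dimension suffices (the finite-dimensional convergence of $\mathbb{Z}_n^*(h)$ for a fixed linear combination $h$), and for the equicontinuity it exploits that the bootstrap blocks $B_1^*,\dots$ are \emph{exactly} conditionally i.i.d.\ by construction of the resampling scheme, whatever the quality of the pseudo-regeneration. After switching to the process $\mathbb{U}_n^*$ with the deterministic count $w(n)=1+\lfloor n/\mathbb{E}_{A_{\mathcal{M}}}(\tau_A)\rfloor$, it bounds $\mathbb{P}^*(\|\mathbb{U}_n^*(h)\|_{\mathcal{F}_\delta}>\epsilon)$ by $w(n)$ times a single-block tail probability, splits into two terms, and controls them with Markov's inequality together with Radulovi\'c's Proposition~\ref{radul} (existence of $\phi$ with $\phi(x)/x^2\to\infty$ and $\mathbb{E}\phi(W)<\infty$), the convergence of $\mathbb{E}^*(l(B_1^*)^2)$, and the equicontinuity of the original process for the term $\|\hat{\mu}_{n,h}\|_{\mathcal{F}_\delta}$; the uniform entropy integral enters only through total boundedness of $\mathcal{F}$ in $L_2(\mu)$ and through Levental's theorem, not through a chaining bound for the bootstrap process as you propose. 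If you want to salvage your route, you would either have to prove the uniform coupling lemma, or restructure as the paper does so that the conditionally i.i.d.\ structure of the $B_i^*$ carries the uniform part of the argument.
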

\begin{proof}
The proof is based on the~bootstrap central limit theorem introduced by~Gin\'{e} and~Zinn (1990). To prove the weak convergence of~the~process $\mathbb{Z}_{n}^{*}$ we need to show 
\begin{enumerate}
    \item Finite dimensional convergence of~distributions of~$\mathbb{Z}_{n}^{*}$ to~$G$.
\item Stochastic asymptotic equicontinuity in~probability under~$\mathbb{P}_{\nu}$ with~respect to~the~totally bounded semimetric~$\rho$ on~$\mathcal{F}.$
\end{enumerate} 
\par Firstly, we prove that $ (\mathbb{Z}_{n}^{*}(f_{i1}), \cdots,\mathbb{Z}_{n}^{*}(f_{ik})) $
converges weakly in probability to $ (G(f_{i1}), \cdots, G(f_{ik}))$ for~every fixed finite collection of~functions $ \{f_{i1}, \cdots, f_{ik}\} \subset \mathcal{F}.$ Denote by~$\xrightarrow{L}$ the weak convergence in~law in~the sense of~Hoffmann-J\o{}rgensen. We want to show that for~any fixed collection $(a_{1}, \cdots, a_{k}) \in \mathbb{R}$ we have
$$ \sum_{j=1}^{k} a_{j}\mathbb{Z}_{n}^{*}(f_{ij}) \xrightarrow{L} \mathcal{N}(0, \gamma^{2}) \text{\;\; in probability under\;} \mathbb{P}_{\nu},$$
 where
$$ \gamma^{2} = \sum_{j=1}^{k} a_{j}^{2} Var(\mathbb{Z}_{n}(f_{ij})) + \sum_{s \neq r} a_{i}a_{j} Cov(\mathbb{Z}_{n}(f_{is}), \mathbb{Z}_{n}(f_{ir})). $$
Let $h= \sum_{j=1}^{k} a_{j}f_{ij}.$ By linearity of~$h$ and Theorem \ref{5.9} we conclude that
\begin{equation}\label{eq:fconv}
\mathbb{Z}_{n}(h) \xrightarrow{L} G(h).
\end{equation}
The above convergence of~$\mathbb{Z}_{n}(h)$ coupled with~the~Theorems \ref{3.1} and~\ref{3.2} guarantee that $\mathbb{Z}_{n}^{*}(h) \xrightarrow{L} G(h)$ in~probability under~$\mathbb{P}_{\nu}.$
Thus, the finite dimensional convergence for the~$\mathbb{Z}_{n}^{*}(f),\;f \in \mathcal{F}$ is established.
\par To verify $[2]$ we need to check if~for every~$\epsilon > 0$
\begin{equation}\label{eq:equi} \lim_{\delta \rightarrow 0}\limsup_{n \rightarrow \infty} \mathbb{P}^{*}(\|\mathbb{Z}_{n}^{*}\|_{\mathcal{F}_{\delta}} > \epsilon) = 0 \text{\;\;in probability under\;} \mathbb{P}_{\nu},
\end{equation}
where $\|R\|_{\mathcal{F}_{\delta}} := \sup\{ |R(f) - R(g)| : \rho(f,g) < \delta\}$ and~$R \in l^{\infty}(\mathcal{F}).$ Moreover, $\mathcal{F}$ must be totally bounded in~$L_{2}(\mu).$ In fact, the latter was shown by~Levental (1988). For the reader's convenience we repeat the~reasoning from~the~mentioned paper. 
\par Consider class of functions~$\mathcal{H} = \{|f-g|: f,g \in \mathcal{F}\}.$ Denote by~$Q_{n}$ the~$n$-th empirical measure of~an~i.i.d. process whose law is~$\mu.$ Using~basic properties of~covering numbers we obtain that~$ N_{1}(\epsilon, \mathcal{G}, Q_{n}) \leq \left(N_{2}\left(\frac{\epsilon}{2}, \mathcal{F}\right)\right)^{2} < \infty$ and~thus by~the~SLLN for~$Q_{n}$ (see Theorem 3.6 in~Levental (1988)) we have that $$\sup_{h \in \mathcal{H}} |(Q_{n} - \mu)(h)| \rightarrow 0 \text{\; a.s.}(\mu).$$ Since $\mathcal{F}$ is totally bounded in~$L_{1}(Q)$ for~every measure~$Q$ with~finite support it follows that~is totally bounded in~$L_{1}(\mu).$ Moreover, one can show that if~an~envelope~of~$\mathcal{F}$ is in~$L_{2}(\mu),$ then~$\mathcal{F}$ is totally bounded in~$L_{2}(\mu).$
\par In order to~show~\eqref{eq:equi}, firstly, we replace the~random numbers $n^{*}_{A_{\mathcal{M}}}$ and~$l^{*}_{n}$ by~their deterministic equivalents. By~the~same arguments as~in~the~proof of~the~Theorem~\ref{3.2} (see Bertail and Cl\'{e}men\c{c}on (2006), page 710 for details) we have the~following convergences
$$ \frac{l(B^{*}_{j})}{n}\xrightarrow{P^{*}} 0 \text{\;\;and\;\;}\frac{n^{*}_{A_{\mathcal{M}}}}{n} \xrightarrow{P^{*}} 1 $$ in $\mathbb{P}_{\nu}-$probability along the sample path as~$n \rightarrow \infty$
and
$$ \frac{l_{n}^{*}}{n} - \mathbb{E}_{A_{\mathcal{M}}}(\tau_{A_{\mathcal{M}}})^{-1} \xrightarrow{P^{*}} 0 $$ in $\mathbb{P}_{\nu}-$probability along the sample path as~$n \rightarrow \infty.$ Thus, we conclude that
\begin{align*}
\mathbb{Z}_{n}^{*}(f) &= \sqrt{ n^{*}_{A_{\mathcal{M}}}} \left[ \frac{1}{n^{*}_{A_{\mathcal{M}}}} \sum_{i=1}^{l_{n}^{*}-1} f(B_{i}^{*}) - \frac{1}{\hat{n}_{A_{\mathcal{M}}}} \sum_{i=1}^{\hat{l}_{n}-1} f(\hat{B}_{i})\right] \\
&= \frac{1}{\sqrt{n^{*}_{A_{\mathcal{M}}} }}\left[  \sum_{i=1}^{l_{n}^{*}-1}\left\{ f(B_{i}^{*}) - \hat{\mu}_{n}(f)l(B_{i}^{*})\right\}\right]\\
&= \frac{1}{\sqrt{n}} \left[\sum_{i=1}^{1+ \left\lfloor{ \frac{n}{\mathbb{E}_{A_{\mathcal{M}}}(\tau_{A})}}\right\rfloor} \{ f(B_{i}^{*}) - \hat{\mu}_{n}(f)l(B_{i}^{*})\}\right] + o_{\mathbb{P}^{*}}(1),
\end{align*}
where $\lfloor{ \;x\; \rfloor}$ is an integer part of~$x \in \mathbb{R}.$
The preceding reasoning allows us to switch to~the process
$$ \mathbb{U}_{n}^{*}(f) = \frac{1}{\sqrt{n}} \left[\sum_{i=1}^{1+ \left\lfloor{ \frac{n}{\mathbb{E}_{A_{\mathcal{M}}}(\tau_{A})}}\right\rfloor}  \{ f(B_{i}^{*}) - \hat{\mu}_{n}(f)l(B_{i}^{*})\}\right].$$
Observe, that $\{f(B_{i}^{*}) - \hat{\mu}_{n}(f)l(B_{i}^{*})\}_{i \geq 1} $ forms the~sequence of~i.i.d. random variables.
 \par Next, take $h = f-g.$ Denote by~$w(n) = 1+ \left\lfloor{ \frac{n}{\mathbb{E}_{A_{\mathcal{M}}}(\tau_{A})}}\right\rfloor $ and~$Y_{i} = l(B_{i}^{*}) - \hat{\mu}_{n}(f)l(B_{i}^{*}).$ We have the following inequality (by the fact that $Y_{i}$'s are i.i.d.)
\begin{align*}
\mathbb{P}^{*}(\|\mathbb{U}^{*}_{n}(h)\|_{\mathcal{F}_{\delta}} > \epsilon) &\leq w(n) \mathbb{P}^{*}\left(\frac{1}{\sqrt{n}}\|h(B^{*}_{1}) - l(B_{1}^{*})\hat{\mu}_{n,h}\|_{\mathcal{F}_{\delta}} > \epsilon \right).\end{align*}
The right hand side of the above inequality is bounded by
\begin{align*}  w(n) \mathbb{P}^{*}\left(\|h(B^{*}_{1})\|_{\mathcal{F}_{\delta}} > \frac{\sqrt{n}\epsilon}{2} \right) + w(n) \mathbb{P}^{*}\left( |l(B^{*}_{1})|\|\hat{\mu}_{n,h}\|_{\mathcal{F}_{\delta}} > \frac{\sqrt{n}\epsilon}{2} \right) = I + II.
\end{align*}
\par In the following, we investigate the~asymptotic behaviour of~$I$ and~$II.$ Some of~the reasoning relies on~the useful proposition from~Radulovi\'{c} (2004).
\begin{proposition}\label{radul}
For any random variable $W,$ such that $\mathbb{E} W^{2} < \infty,$ there exists a~positive increasing function~$\phi: \mathbb{R}^{+} \rightarrow \mathbb{R}^{+}$ such that
$$ \lim_{x \rightarrow \infty} \frac{\phi(x)}{x^{2}} = + \infty \text{\;\;and\;\;} \mathbb{E} \phi(W) < \infty.$$
\end{proposition}
\begin{remark}
The~sketch of the proof of~the~above Proposition is moved to~the~Appendix section.
\end{remark}
\par By Markov's inequality, we have that
$$w(n) \mathbb{P}^{*}\left(\|h(B^{*}_{1})\|_{\mathcal{F}_{\delta}} > \frac{\sqrt{n}\epsilon}{2} \right) \leq w(n)\frac{\mathbb{E}^{*}(\phi(\|h(B^{*}_{1})\|_{\mathcal{F}_{\delta}}))}{\phi(\frac{\sqrt{n}\epsilon}{2})}$$
By the~Proposition~\ref{radul} we conclude that
$$ \frac{w(n)}{\phi\left(\sqrt{n}/2\right)} = \frac{w(n)}{n} \cdot \frac{n}{\phi\left(\sqrt{n}/2\right)} \rightarrow 0 \text{\;\;a.s.}$$
since $\frac{w(n)}{n} \leq 1.$ Note also that
$$ \mathbb{E}^{*}(\phi(\|h(B^{*}_{1})\|_{\mathcal{F}_{\delta}})) \leq \mathbb{E}^{*}(\phi(|2F(B^{*}_{1})|)) <\infty \text{\;\;a.s.}$$
since $\mathcal{F}$ is uniformly bounded.
Thus,
$$w(n) \mathbb{P}^{*}\left(\|h(B^{*}_{1})\|_{\mathcal{F}_{\delta}} > \frac{\sqrt{n}\epsilon}{2} \right) \rightarrow 0 \text{\;a.s.}$$
\par Next, we investigate the~asymptotic behaviour of~$II.$ By Markov's inequality, we have
$$ w(n) \mathbb{P}^{*}\left( |l(B^{*}_{1})|\|\hat{\mu}_{n,h}\|_{\mathcal{F}_{\delta}} > \frac{\sqrt{n}\epsilon}{2} \right) \leq 4 w(n) \frac{\mathbb{E}^{*}(|l(B^{*}_{1})|)^{2} \|\hat{\mu}_{n,h}\|^{2}_{\mathcal{F}_{\delta}}}{n}.
$$
We know that $\frac{w(n)}{n} \leq 1$ and~$\|\hat{\mu}_{n,h}\|_{\mathcal{F}_{\delta}} \rightarrow 0$ in~$\mathbb{P}_{\nu}-$probability because of~the~stochastic equicontinuity of~the~original process~$\mathbb{Z}_{n}.$ Moreover, it is proven in~Bertail and Cl\'{e}men\c{c}on (2006) that
$$ \mathbb{E}^{*}\left(l(B^{*}_{1})^{2} | X^{(n+1)}\right) \rightarrow \mathbb{E}_{A_{\mathcal{M}}}(\tau^{2}_{A_{\mathcal{M}}}) < \infty$$ in $\mathbb{P}_{\nu}-$probability along the sample as $n \rightarrow \infty.$
Thus,
$$w(n) \mathbb{P}^{*}\left( |l(B^{*}_{1})|\|\hat{\mu}_{n,h}\|_{\mathcal{F}_{\delta}} > \frac{\sqrt{n}\epsilon}{2} \right) \rightarrow 0$$ in $\mathbb{P}_{\nu}-$probability along the sample as $n \rightarrow \infty.$
\par The above reasoning implies that \eqref{eq:equi} holds. We have checked that both conditions $[1]$ and~$[2]$ are satisfied by~$\mathbb{Z}_{n}^{*}.$ Thus, we can apply the bootstrap CLT proposed by Gin\'{e} and Zinn (1990) which yields the~desired result.
\end{proof}
\begin{remark}
Theorem \ref{uniformclt} is a generalization of the~Theorem 2.2 from Radulovi\'{c} (2004) for countable Markov chains. Note that the reasoning from~the proof of the above theorem can be directly applied to the proof of~Radulovi\'{c}'s result. The~part concerning the~proof of~the~asymptotic stochastic equicontinuity~of the bootstrap version of the empirical process indexed by uniformly bounded class of functions~$\mathcal{F}$ can be significantly simplified. As shown in the proof of the~Theorem~\ref{uniformclt}, we can switch from the process $\mathbb{Z}_{n}^{*}(f)_{f \in \mathcal{F}} := \sqrt{n^{*}} \{\mu_{n^{*}}(f) - \mu_{n_{A}}(f)\},$ where $n_{A} = \tau_{A}(l_{n}) - \tau_{A}$ to the~process $$\mathbb{U}_{n}^{*}(f) = \frac{1}{\sqrt{n}} \left[\sum_{i=1}^{1+ \left\lfloor{ \frac{n}{\mathbb{E}_{A}(\tau_{A})}}\right\rfloor}  \{ f(B_{i}^{*}) - \mu_{n_{A}}(f)l(B_{i}^{*})\}\right]$$ and~the standard probability inequalities applied to~the~i.i.d. blocks of data yield the result.  
\end{remark}
\par In the following, we show that we can weaken the~assumption of~uniform boundedness imposed on~the~class~$\mathcal{F}.$ By the results of~Tsai (1998), it is sufficient that~$\mathcal{F}$ has an~envelope in~$L_{2}(\mu),$ then the~uniform bootstrap central limit theorem holds.   
\begin{theorem}\label{uniformbootun}
Suppose that~$(X_{n})$ is positive recurrent Harris Markov chain and the assumptions from the~Theorem \ref{3.2} are satisfied by~$(X_{n}).$ Assume further that $\mathcal{F}$ is a~permissible class of functions and~such that the envelope~$F$ satisfies
\begin{equation}\label{eq:encon} \mathbb{E}_{A_{\mathcal{M}}} \left[ \sum_{\tau_{A_{\mathcal{M}}} < j \leq \tau_{A_{\mathcal{M}}}(2)} F(X_{j})\right]^{2 + \gamma} < \infty, \;\; \gamma > 0\;(\text{fixed}).
\end{equation}
Suppose, that the~following uniformity condition holds
\begin{equation}
\int_{0}^{\infty} \sqrt{\log N_{2}(\epsilon, \mathcal{F})} d\epsilon < \infty.
\end{equation}
Then the process 
\begin{equation}
\mathbb{Z}^{*}_{n} = n^{* 1/2}_{A_{\mathcal{M}}} \left[ \frac{1}{n^{*}_{A_{\mathcal{M}}}} \sum_{i=1}^{l_{n}^{*}-1} f(B_{i}^{*}) - \frac{1}{\hat{n}_{A_{\mathcal{M}}}} \sum_{i=1}^{\hat{l}_{n}-1} f(\hat{B}_{i})\right]
\end{equation}
converges in probability~under~$\mathbb{P}_{\nu}$ to~a~gaussian process $G$ indexed by~$\mathcal{F}$ whose sample paths are bounded and uniformly continuous with respect to the~metric~$L_{2}(\mu).$
\end{theorem}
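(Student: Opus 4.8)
The plan is to reduce Theorem~\ref{uniformbootun} to the uniformly bounded case established in Theorem~\ref{uniformclt} by a truncation argument, following the arguments of Tsai (1998). Fix a level $M>0$ and split each $f\in\mathcal{F}$ through its envelope: write $f^{M}(x)=f(x)\mathbb{I}\{F(x)\le M\}$ and $\tilde{f}^{M}(x)=f(x)\mathbb{I}\{F(x)>M\}$, so that $f=f^{M}+\tilde{f}^{M}$, and decompose $\mathbb{Z}^{*}_{n}=\mathbb{Z}^{*,M}_{n}+\mathbb{R}^{*,M}_{n}$ accordingly, where $\mathbb{Z}^{*,M}_{n}$ is the bootstrap process obtained by substituting $f^{M}$ for $f$ and $\mathbb{R}^{*,M}_{n}$ the one obtained by substituting $\tilde{f}^{M}$. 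First I would record that \eqref{eq:encon} together with Kac's theorem gives $F\in L_{2}(\mu)$, since $\mathbb{E}_{A_{\mathcal{M}}}\big[\sum_{j\le\tau_{A_{\mathcal{M}}}}F(X_{j})^{2}\big]\le\mathbb{E}_{A_{\mathcal{M}}}\big[\big(\sum_{j\le\tau_{A_{\mathcal{M}}}}F(X_{j})\big)^{2}\big]<\infty$; hence, exactly as in the proof of Theorem~\ref{uniformclt}, $\mathcal{F}$ is totally bounded in $L_{2}(\mu)$, which is the semimetric of the limiting process $G$.

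Next I would dispose of the truncated part. The class $\mathcal{F}^{M}=\{f^{M}:f\in\mathcal{F}\}$ is permissible, uniformly bounded by $M$, and, because $f\mapsto f\,\mathbb{I}\{F\le M\}$ is a pointwise contraction, it satisfies $\|f^{M}-g^{M}\|_{L_{2}(Q)}\le\|f-g\|_{L_{2}(Q)}$ for every finitely supported $Q$ and hence $N_{2}(\epsilon,\mathcal{F}^{M})\le N_{2}(\epsilon,\mathcal{F})$, so the uniform entropy condition \eqref{eq:unicond} is inherited. Theorem~\ref{uniformclt} therefore applies to $\mathcal{F}^{M}$ and gives that $\mathbb{Z}^{*,M}_{n}$ converges in $\mathbb{P}_{\nu}$-probability to a Gaussian process $G_{M}$ with bounded, $L_{2}(\mu)$-uniformly continuous paths; in particular the stochastic asymptotic equicontinuity \eqref{eq:equi} holds for $\mathbb{Z}^{*,M}_{n}$, and since $\|f^{M}-g^{M}\|_{L_{2}(\mu)}\le\|f-g\|_{L_{2}(\mu)}$ the same modulus controls $\mathbb{Z}^{*,M}_{n}$ along the $L_{2}(\mu)$-semimetric of $\mathcal{F}$ itself.

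The heart of the matter is to show that the remainder is asymptotically negligible once $M$ is large:
\begin{equation}\label{eq:tailsmall}
\lim_{M\to\infty}\,\limsup_{n\to\infty}\,\mathbb{P}^{*}\big(\|\mathbb{R}^{*,M}_{n}\|_{\mathcal{F}}>\epsilon\big)=0\qquad\text{in }\mathbb{P}_{\nu}\text{-probability, for every }\epsilon>0 .
\end{equation}
As in the proof of Theorem~\ref{uniformclt} I would first replace $n^{*}_{A_{\mathcal{M}}}$ and $l^{*}_{n}$ by their deterministic equivalents and pass to the i.i.d.-block process $\mathbb{U}^{*,M}_{n}(f)=n^{-1/2}\sum_{i=1}^{w(n)}\{\tilde{f}^{M}(B^{*}_{i})-\hat{\mu}_{n}(\tilde{f}^{M})\,l(B^{*}_{i})\}$ with $w(n)=1+\lfloor n/\mathbb{E}_{A_{\mathcal{M}}}(\tau_{A_{\mathcal{M}}})\rfloor$; its summands are i.i.d.\ under $\mathbb{P}^{*}$ with conditional mean zero and each is dominated by $\tilde{F}^{M}(B^{*}_{i})+|\hat{\mu}_{n}(\tilde{F}^{M})|\,l(B^{*}_{i})$, where $\tilde{F}^{M}(B^{*}_{i})=\sum_{j\in B^{*}_{i}}F(X_{j})\mathbb{I}\{F(X_{j})>M\}$ is the truncated block envelope and $\hat{\mu}_{n}(\tilde{F}^{M})\to\mu(F\,\mathbb{I}\{F>M\})$. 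To this bootstrapped block process I would apply a maximal inequality of the type used by Tsai (1998) — the uniform entropy integral of the associated block class being still finite by \eqref{eq:unicond} via the block reduction of Levental (1988) — which, followed by Markov's inequality in the spirit of the treatment of the terms $I$ and $II$ in the proof of Theorem~\ref{uniformclt}, bounds $\mathbb{P}^{*}(\|\mathbb{U}^{*,M}_{n}\|_{\mathcal{F}}>\epsilon)$ by a universal constant times $\epsilon^{-2}\big(\mathbb{E}^{*}[\tilde{F}^{M}(B^{*}_{1})^{2}]+\mu(F\,\mathbb{I}\{F>M\})^{2}\,\mathbb{E}^{*}[l(B^{*}_{1})^{2}]\big)$. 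By the consistency results of Bertail and Cl\'{e}men\c{c}on (2006) underlying Theorem~\ref{3.1} (and already used in the proof of Theorem~\ref{uniformclt} for $\mathbb{E}^{*}[l(B^{*}_{1})^{2}\mid X^{(n+1)}]\to\mathbb{E}_{A_{\mathcal{M}}}(\tau^{2}_{A_{\mathcal{M}}})$), one has $\mathbb{E}^{*}[\tilde{F}^{M}(B^{*}_{1})^{2}\mid X^{(n+1)}]\to\mathbb{E}_{A_{\mathcal{M}}}\big[\big(\sum_{j\le\tau_{A_{\mathcal{M}}}}F(X_{j})\mathbb{I}\{F(X_{j})>M\}\big)^{2}\big]$ in $\mathbb{P}_{\nu}$-probability; letting $M\to\infty$, dominated convergence with the dominating function supplied by \eqref{eq:encon} (valid since $2<2+\gamma$) sends both right-hand limits to $0$, which is \eqref{eq:tailsmall}.

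Finally I would assemble the pieces. From $\|\mathbb{Z}^{*}_{n}\|_{\mathcal{F}_{\delta}}\le\|\mathbb{Z}^{*,M}_{n}\|_{\mathcal{F}_{\delta}}+2\|\mathbb{R}^{*,M}_{n}\|_{\mathcal{F}}$, taking $\limsup_{n}\mathbb{P}^{*}(\,\cdot>\epsilon)$, then letting $\delta\to0$ (the first term vanishing by the equicontinuity of $\mathbb{Z}^{*,M}_{n}$ along $L_{2}(\mu)$) and then $M\to\infty$ (using \eqref{eq:tailsmall}), one obtains \eqref{eq:equi} for $\mathbb{Z}^{*}_{n}$ over $\mathcal{F}$; the finite-dimensional convergence of $\mathbb{Z}^{*}_{n}$ follows as in the proof of Theorem~\ref{uniformclt} from Theorems~\ref{3.1} and~\ref{3.2}, or alternatively by combining the convergence of $\mathbb{Z}^{*,M}_{n}$ with the uniform-in-$n$ $L_{2}$-smallness of the finite-dimensional marginals of $\mathbb{R}^{*,M}_{n}$. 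Together with the total boundedness of $\mathcal{F}$ in $L_{2}(\mu)$, the bootstrap CLT of Gin\'{e} and Zinn (1990), invoked exactly as in Theorem~\ref{uniformclt}, yields the claimed convergence of $\mathbb{Z}^{*}_{n}$ to $G$ in $\mathbb{P}_{\nu}$-probability. I expect the main obstacle to be \eqref{eq:tailsmall}: it requires a maximal inequality for the bootstrapped i.i.d.-block empirical process that is simultaneously uniform over $\mathcal{F}$ and quantified through the $L_{2}$-size of the truncated block envelope, together with the transfer of the conditional bootstrap block moments to the limiting $\mathbb{E}_{A_{\mathcal{M}}}$-moments, so that the envelope condition \eqref{eq:encon} can actually be brought to bear.
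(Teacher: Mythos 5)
Your truncation strategy is a genuinely different route from the paper's. The paper does not truncate at all: it re-runs the proof of Theorem~\ref{uniformclt} verbatim and only patches the places where uniform boundedness entered, namely (i) the finite-dimensional convergence and the total boundedness of $\mathcal{F}$ in $L_{2}(\mu)$, which it gets from Tsai's (1998) unbounded UCLT (Theorem 4.3 there) in place of Levental's Theorem~\ref{5.9}, together with the remark of Bertail and Cl\'{e}men\c{c}on that Theorem~\ref{3.2} extends to unbounded $f$; and (ii) the single moment bound $\mathbb{E}^{*}\bigl(\phi(|2F(B_{1}^{*})|)\bigr)<\infty$ in the term $I$ of the equicontinuity argument, which it obtains by taking $\phi(x)=x^{2+\gamma}$ in Proposition~\ref{radul} --- this is exactly why the hypothesis \eqref{eq:encon} asks for a $(2+\gamma)$-th moment rather than a second moment (see the remark following the theorem). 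Your argument, by contrast, never actually uses the extra $\gamma$: everything you invoke is a second-moment quantity of the truncated block envelope plus dominated convergence, so if your route closed it would in fact prove a stronger statement than the paper's. That discrepancy is a signal that you are not proving the theorem the way its hypotheses are calibrated.

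The concrete gap is the central step, the negligibility of the remainder process $\mathbb{R}^{*,M}_{n}$ uniformly over $\mathcal{F}$ as $M\to\infty$. You reduce it to a bound of the form $\mathbb{P}^{*}(\|\mathbb{U}^{*,M}_{n}\|_{\mathcal{F}}>\epsilon)\lesssim \epsilon^{-2}\,\mathbb{E}^{*}[\tilde{F}^{M}(B_{1}^{*})^{2}]+\cdots$, but the only probabilistic tools actually available in the paper's proof of Theorem~\ref{uniformclt} (the union bound over $w(n)$ blocks followed by Markov's inequality applied to a \emph{single} block) control the probability that one summand is large, not the supremum over $f\in\mathcal{F}$ of the centered \emph{sum}; a bound of the type you state requires a genuine symmetrization-plus-chaining maximal inequality for the bootstrapped i.i.d.-block process with the truncated block envelope as the $L_{2}$ scale, and you leave this entirely as a black box (``a maximal inequality of the type used by Tsai (1998)''). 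Until that inequality is stated and verified for the conditional bootstrap measure $\hat{\mathcal{L}}_{n}$, the key display does not follow. A secondary issue: your primary route for the finite-dimensional convergence (``as in the proof of Theorem~\ref{uniformclt} from Theorems~\ref{3.1} and~\ref{3.2}'') silently relies on Levental's Theorem~\ref{5.9} for the convergence of $\mathbb{Z}_{n}(h)$, which requires uniform boundedness; for unbounded $h$ you must substitute Tsai's theorem, as the paper does (your alternative truncation route for the marginals would also work, but should then be the one you actually carry out).
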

\begin{proof}
The proof of Theorem \ref{uniformbootun} goes analogously to the proof of the~Theorem~\ref{uniformclt} with few natural modifications. We indicate the critical points where the~changes are necessary. The notation remains in the agreement with~the~previous theorem.
\begin{itemize}
\item Theorem 4.3 from Tsai (1998) establishes the weak convergence  $$\mathbb{Z}_{n}(h) \xrightarrow{L} G(h).$$ 
\item According to Bertail and Cl\'{e}men\c{c}on (2006), Theorem \ref{3.2} is also true when~$f$ is unbounded (see Bertail and Cl\'{e}men\c{c}on (2006), page 706 for~details). Thus, the finite dimensional convergence of distributions of~$\mathbb{Z}_{n}^{*}$ to the right gaussian process is ensured.
\item It is shown in Tsai (1988) that $\mathcal{F}$ is totally bounded in~$L_{2}(\mu)$ when $\mathcal{F}$ fulfills only the condition that the~envelope~$F$ is in~$L_{2}(\mu)$ (see Tsai (1998), page 9 for details).
\item The~finiteness of~the~$\mathbb{E}^{*}(\phi(|2F(B_{1}^{*})|))$ in~$\mathbb{P}_{v}-$probability along the sample as~$n \rightarrow \infty$ follows from the~Proposition~\ref{radul}. We know that if~the~condition~\eqref{eq:encon} on~the~envelope~$F$ holds, there~exists a~positive increasing function $\phi:\; \mathbb{R}^{+} \rightarrow \mathbb{R}^{+}$ such that
$$\phi(x) = x^{2 + \gamma} \text{\;\; and\;\;} \mathbb{E}_{A_{\mathcal{M}}} \left[ \sum_{\tau_{A_{\mathcal{M}}} < j \leq \tau_{A_{\mathcal{M}}}(2)} \phi(F(X_{j}))\right]^{2 + \gamma} < \infty.
$$
\end{itemize}
\end{proof}
\begin{remark}
It is noteworthy that the uniform central limit theorem for Harris recurrent Markov chains in Tsai (1998) holds with~the~weaker condition on~the~envelope~$F,$ i.e. 
$$ \mathbb{E}_{A_{\mathcal{M}}} \left[ \sum_{\tau_{A_{\mathcal{M}}} < j \leq \tau_{A_{\mathcal{M}}}(2)} F(X_{j})\right]^{2 } < \infty.$$
However, in the unbounded version of the bootstrap uniform central limit theorem for Harris Markov chains we need to~assume the~finiteness of the~$ (2+\gamma)$-th moment in order to show the~finiteness of~the~$\mathbb{E}^{*}(\phi(|2F(B_{1}^{*})|))$ in~$\mathbb{P}_{v}-$probability along the sample as~$n \rightarrow \infty.$ 
\end{remark}
\section{ Bootstrapping Fr\'{e}chet differentiable functionals}\label{frech}
Robust statistics provides tools to deal with data when we suspect that they include a small proportion of outliers. Robust statistical methods are applied to the solution of many problems such as estimation of regression parametres, estimation of scale and location. One of the key concepts of robust statistics when detecting the outliers in the data is an influence function. In the i.i.d. setting, the influence function measures the change in the value of some functional~$\phi(P)$ if we replace some infinitesimally small part of~$P$ by a~pointmass~$x$ (see van der Vaart (2000) for a detailed treatment of these issues in the i.i.d. framework). Generalizing the concepts of robustness and influence function into dependent case is a~challeging task (see Bertail and Cl\'{e}men\c{c}on (2015) and references therein). In a~Markovian setting, 
one can measure the influence of~(approximate) regeneration data blocks instead of single observations. The regenerative approach proposed by~Bertail and Cl\'{e}men\c{c}on (2015) naturally leads to~central limit and~convolution theorems. 
\par In our framework, we show how we can use results from the previous section to yield the bootstrap uniform central limit theorems for~general differentiable functionals over uniformly bounded classes (and with an~envelope in~$L_{2}(\mu))$ of~functions~$\mathcal{F}.$
\subsection{ Preliminary assumptions and remarks}
In robust statistics the influence function plays a crucial role to detect outliers in data. Functions and estimators which have an~unbounded influence function should be carefully investigated, because the small proportion of the~observations would have too much influence on the estimator.
\par Let's make our considerations rigorous. We denote by~$\mathcal{P}$ the set of~all probability measures on~$E.$ We keep the~notation  in~agreement with~notation introduced in~Bertail and~Cl\'{e}men\c{c}on (2015).
\par The classical definition of the influence function is provided below.
\begin{definition}
Let $(\vartheta, \|\cdot\|)$ be a~separable Banach space. Let~$T: \mathcal{P} \rightarrow \vartheta$ be a~functional on~$\mathcal{P}.$ If the limit
$$ \frac{T((1-t)\mu + t \delta_{x}) - T(\mu)}{t}, \text{\;\;\;\;as\;} t \rightarrow 0$$
is finite for all $\mu \in \mathcal{P}$ and for any~$x \in E,$ then we say that~the~influence function $T^{(1)}: \mathcal{P} \rightarrow \vartheta$ of the functional~$T$ is  well-defined and for all $x \in E$
$$ T^{(1)}(x, \mu) = \lim_{t \rightarrow 0} \frac{T((1-t)\mu + t \delta_{x}) - T(\mu)}{t}.$$
\end{definition}
\par In the following, we recall the definition of~Fr\'{e}chet derivative which is an~important concept in robust statistics. In particular, Fr\'{e}chet differentiability ensures the existence of~the~influence function. Let~$d$ be some metric on~$\mathcal{P}.$
\begin{definition}\label{frechet}
We say that~the~functional $T: \mathcal{P} \rightarrow \mathbb{R}$ is Fr\'{e}chet differentiable at~$\mu_{0} \in \mathcal{P}$ for~a~metric~$d,$ if there exists a~continuous linear operator $DT_{\mu_{0}}$ (from the set of~signed measures of~the~form~$\mu - \mu_{0}$ in~$(\vartheta, \|\cdot \|))$ and a~function~$\epsilon^{(1)}(\cdot,\mu_{0}): \mathbb{R} \rightarrow (\vartheta, \|\cdot \|),$ which is continuous at~$0$ and~$\epsilon^{(1)}(0,\mu_{0}) = 0$ such that
$$ \forall\; \mu \in \mathcal{P},\;\;T(\mu) - T(\mu_{0}) = DT_{\mu_{0}} (\mu - \mu_{0}) + R^{(1)}(\mu,\mu_{0}),$$
where $R^{(1)}(\mu, \mu_{0}) = d(\mu, \mu_{0}) \epsilon^{(1)}(d(\mu, \mu_{0}), \mu_{0}).$ Furthermore, we say that $T$ has an~influence function $T^{(1)}(\cdot, \mu_{0})$ if the following representation holds for~$DT_{\mu_{0}}:$
$$ \forall\;\mu_{0} \in \mathcal{P},\;\;\; DT_{\mu_{0}} (\mu - \mu_{0}) = \int_{E} T^{(1)}(x, \mu_{0})\mu(dx).$$
\end{definition}
\par In the context of empirical processes indexed by classes of functions, when one want to derive  the uniform central limit theorems for generally differentiable functionals the appropriate choice of metric is the crucial point. We need to choose the metric carefully in order to precisely control the distance $d(\mu_{n}, \mu)$ and the remainder~$ R^{(1)}(\mu_{n}, \mu).$ In our framework we have decided to work with a~generalization of~the~Kolmogorov's distance which is defined as follows:
\begin{definition}
Let~$\mathcal{H}$ be a class of real-valued functions (we do not impose the measurability condition  as one can work with outer measures and the~Hoffmann-J\o{}rgensen (1991) convergence). We define a distance
\begin{equation}\label{eq:metrich}
d_{\mathcal{H}}(P,Q):= \sup_{h \in \mathcal{H}} \left| \int h d(P-Q)\right|
\end{equation}
for any $P, Q \;\in\;\mathcal{P}.$
\end{definition}
\par The choice of metric defined in~\eqref{eq:metrich} is inspired by~the~arguments given by~Barbe and Bertail (1995) and Dudley (1990). Essentially, one may want to work with metric~$d_{\mathcal{H}}$ because it enables very precise control of the distance $d(\mu_{n}, \mu).$ Moreover, in many cases we can find a class of functions~$\mathcal{H},$ which makes the~functionals Fr\'{e}chet diferrentiable for~$d_{\mathcal{H}}.$ The latter is a~significant advantage since choice of~metric that guarantees Fr\'{e}chet differentiability of~functionals is usually challenging (see Barbe and Bertail (1995) and Dudley (1990) for an~extensive treatment on~this subject).
\par Note, that permissible, uniformly bounded (or with an envelope in~$L_{2}(\mu)$) classes of functions~$\mathcal{F}$ fulfill the~conditions imposed on the class~$\mathcal{H}.$ Thus, we can ease the notation and write $d_{\mathcal{F}}$ for the distance defined by~\eqref{eq:metrich}.
\subsection{Bootstrap uniform central limit theorems for Fr\'{e}chet differentiable functionals}
In this subsection, we show how the results from Levental (1988), Tsai (1998) and~from the previous section yield the uniform bootstrap central limit theorems for~Fr\'{e}chet differentiable functionals. Before we formulate the theorems, we briefly recall the notation. In general Harris case, $$\mu_{n}^{*} = \frac{1}{n^{*}_{A_{\mathcal{M}}}} \sum_{i=1}^{l_{n}^{*}-1}f(B_{i}^{*}) \text{\;\;and\;\;} \hat{\mu}_{n} = \frac{1}{\hat{n}_{A_{\mathcal{M}}}} \sum_{i=1}^{\hat{l}_{n}-1}f(\hat{B}_{i}),$$ where~$\hat{B}_{i},\; i = 1, \cdots, \hat{l}_{n}-1$ are pseudo-regeneration blocks. In regenerative case, the empirical mean is of~the~form $$\mu_{n} = \frac{1}{n_{A}} \sum_{i=1}^{l_{n}-1}f(B_{i}).$$
\par The crucial observation in~order to~establish the results is, that as long as we can control the distance~$d_{\mathcal{F}}(\mu_{n}^{*}, \hat{\mu}_{n})$ (we require it would be sufficiently small), we can control the remainder term~$R^{(1)}(\mu^{*}_{n}, \hat{\mu}_{n}).$ By the uniform central limit theorem, the~linear part of~the~$T(\mu_{n}^{*}) - T(\hat{\mu}_{n})$ is converging weakly to~a~desired gaussian process which yields our result.
\begin{theorem}\label{BootMark}
Let $\mathcal{F}$ be a~permissible, uniformly bounded class of functions, such that
$$ \int_{0}^{\infty} \sqrt{\log N_{2}(\epsilon, \mathcal{F})}d\epsilon < \infty.$$ Suppose that the conditions of Theorem~\ref{uniformclt} hold and~$T: \mathcal{P} \rightarrow \mathbb{R}$ is Fr\'{e}chet differentiable functional at~$\mu.$ Then, in general Harris positive recurrent case, we have that $ n^{1/2} (T(\mu_{n}^{*}) - T(\hat{\mu}_{n})) $ converges weakly in~$l^{\infty}(\mathcal{F})$ to~a~gaussian process~$G_{\mu}$ indexed by~$\mathcal{F},$ whose sample paths are bounded and uniformly continuous with respect to the metric~$L_{2}(\mu).$
\end{theorem}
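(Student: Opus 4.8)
The plan is to exploit the Fréchet differentiability expansion directly, reducing the statement about $T(\mu_n^*)-T(\hat\mu_n)$ to the already-established bootstrap uniform CLT for the empirical process $\mathbb{Z}_n^*$ of Theorem \ref{uniformclt}. First I would write, using Definition \ref{frechet} applied twice (at $\mu_0=\mu$ with $\mu=\mu_n^*$ and with $\mu=\hat\mu_n$) and subtracting,
\[
T(\mu_n^*)-T(\hat\mu_n) = DT_\mu(\mu_n^*-\hat\mu_n) + R^{(1)}(\mu_n^*,\mu) - R^{(1)}(\hat\mu_n,\mu),
\]
and then, because $DT_\mu$ is linear with the influence-function representation $DT_\mu(\mu-\mu)=\int_E T^{(1)}(x,\mu)\,\mu(dx)$, the linear term equals the empirical process $\mathbb{Z}_n^*$ evaluated at the (fixed) function $T^{(1)}(\cdot,\mu)$, up to the rescaling that matches $n^{1/2}$ with $n^{*1/2}_{A_{\mathcal M}}$ — the latter being harmless since $n^{*}_{A_{\mathcal M}}/n\xrightarrow{P^*}1$ in $\mathbb{P}_\nu$-probability along the sample, as used in the proof of Theorem \ref{uniformclt}. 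Thus the leading term is $\mathbb{Z}_n^*(T^{(1)}(\cdot,\mu)) + o_{\mathbb{P}^*}(1)$, and more generally, over the class $\mathcal F$, it is the process $f\mapsto \mathbb{Z}_n^*(T^{(1)}(\cdot,\mu))$ which by Theorem \ref{uniformclt} converges in $\mathbb{P}_\nu$-probability to the Gaussian process $G_\mu$ in $l^\infty(\mathcal F)$, with sample paths bounded and uniformly continuous for $L_2(\mu)$.

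Next I would control the two remainder terms. By the definition of Fréchet differentiability for the metric $d_{\mathcal F}$, we have $R^{(1)}(\mu_n^*,\mu)=d_{\mathcal F}(\mu_n^*,\mu)\,\epsilon^{(1)}(d_{\mathcal F}(\mu_n^*,\mu),\mu)$ and similarly for $\hat\mu_n$. The point is that $n^{1/2}R^{(1)}(\mu_n^*,\mu)=\bigl(n^{1/2}d_{\mathcal F}(\mu_n^*,\mu)\bigr)\epsilon^{(1)}(d_{\mathcal F}(\mu_n^*,\mu),\mu)$, and since $d_{\mathcal F}(\mu_n^*,\mu)=\sup_{f\in\mathcal F}|(\mu_n^*-\mu)(f)|$, the bootstrap uniform CLT combined with Theorem \ref{5.9} (applied to $\hat\mu_n$ via Theorem \ref{3.2}) gives that $n^{1/2}d_{\mathcal F}(\mu_n^*,\mu)=O_{\mathbb{P}^*}(1)$ in $\mathbb{P}_\nu$-probability along the sample, while $d_{\mathcal F}(\mu_n^*,\mu)\xrightarrow{P^*}0$, hence by continuity of $\epsilon^{(1)}(\cdot,\mu)$ at $0$ with $\epsilon^{(1)}(0,\mu)=0$ we get $n^{1/2}R^{(1)}(\mu_n^*,\mu)\xrightarrow{P^*}0$. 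The same argument, using Theorem \ref{5.9} / \ref{3.1} directly for the (non-bootstrapped) $\hat\mu_n$, gives $n^{1/2}R^{(1)}(\hat\mu_n,\mu)\to 0$ in $\mathbb{P}_\nu$-probability. Assembling the pieces by Slutsky's lemma in $l^\infty(\mathcal F)$ yields the claimed weak convergence of $n^{1/2}(T(\mu_n^*)-T(\hat\mu_n))$ to $G_\mu$.

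The main obstacle I anticipate is the remainder control in the bootstrap world: one must argue that $n^{1/2}d_{\mathcal F}(\mu_n^*,\mu)$ is stochastically bounded \emph{in $\mathbb{P}^*$-probability along almost every sample path}, not just in the unconditional sense, which requires care because $d_{\mathcal F}$ is a supremum over the whole class $\mathcal F$ and the bootstrap CLT of Theorem \ref{uniformclt} is stated as convergence of $\mathbb{Z}_n^*$ as a random element of $l^\infty(\mathcal F)$ in $\mathbb{P}_\nu$-probability — so one needs that $\|\mathbb{Z}_n^*\|_{\mathcal F}=O_{\mathbb{P}^*}(1)$ follows from that convergence, which it does via the continuous mapping theorem applied to the norm functional, together with tightness of the limit $G_\mu$. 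A secondary technical point is that $T^{(1)}(\cdot,\mu)$ need not itself lie in $\mathcal F$; but since the uniform CLT delivers convergence of the whole process $\{\mathbb{Z}_n^*(f)\}_{f\in\mathcal F}$ and the influence function's role here is only through $DT_\mu$ acting on $\mu_n^*-\hat\mu_n$, the indexing is by $\mathcal F$ throughout and no membership issue arises — this is precisely the simplification over Bertail and Clémençon (2015) noted in the introduction, where no block-level influence function is needed.
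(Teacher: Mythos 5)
Your proposal is correct and follows essentially the same route as the paper's own proof: the Fr\'{e}chet expansion at $\mu$ applied to both $\mu_{n}^{*}$ and $\hat{\mu}_{n}$, identification of the linear term $DT_{\mu}(\mu_{n}^{*}-\hat{\mu}_{n})$ with the bootstrap empirical process via the influence-function representation and Theorem~\ref{uniformclt}, and control of the remainders by showing $d_{\mathcal{F}}(\hat{\mu}_{n},\mu)$ and $d_{\mathcal{F}}(\mu_{n}^{*},\mu)$ are $O(n^{-1/2})$ (the paper additionally converts the conditional $O_{P^{*}}$ rate to an unconditional $O_{\mathbb{P}_{\nu}}$ rate by a Fatou/dominated-convergence argument, which is the same technical point you flag about stochastic boundedness along the sample path). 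No substantive difference.
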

\begin{remark}
It is obvious that the above theorem  works also in the~regenerative case. Replace~$\mathcal{A}_{\mathcal{M}}$ and the~ $\hat{\mu}_{n}$ for the split chain by~$\mathcal{A}$ and $\mu_{n}$ respectively. Then, under the assumptions from Theorem \ref{BootMark}, we have the weak convergence in~$l^{\infty}(\mathcal{F})$  to the  gaussian process indexed by~$\mathcal{F},$ whose sample paths are bounded and uniformly continuous with respect to the metric~$L_{2}(\mu).$
\end{remark}
\begin{proof}
Without loss of generality, we assume that~$\mathbb{E}_{\mu} T^{(1)}(x, \mu) = 0.$ By the Fr\'{e}chet differentiability formulated in definition~\ref{frechet} we have
\begin{equation}\label{eq:ffunctional}
T(\hat{\mu}_{n}) - T(\mu) = DT_{\mu}(\hat{\mu}_{n} - \mu) + d_{\mathcal{F}}(\hat{\mu}_{n}, \mu) \epsilon^{(1)}(d_{\mathcal{F}}(\hat{\mu}_{n}, \mu), \mu)
\end{equation}
and
\begin{equation}\label{eq:dfunctional}
 T(\mu_{n}^{*}) - T(\mu) = DT_{\mu}(\mu_{n}^{*} - \mu) + d_{\mathcal{F}}(\mu_{n}^{*}, \mu) \epsilon^{(1)}(d_{\mathcal{F}}(\mu_{n}^{*}, \mu), \mu).
\end{equation}
Thus,
\begin{align*}\label{eq:boott}
\sqrt{n}(T(\mu_{n}^{*}) - T(\hat{\mu}_{n})) &= \sqrt{n}\left(DT_{\hat{\mu}_{n}}(\mu_{n}^{*} - \hat{\mu}_{n})\right) + \sqrt{n}\left(d_{\mathcal{F}}(\hat{\mu}_{n}, \mu) \epsilon^{(1)}(d_{\mathcal{F}}(\hat{\mu}_{n}, \mu), \mu)\right) \\&\;\;\;+ \sqrt{n}\left(d_{\mathcal{F}}(\mu_{n}^{*}, \mu) \epsilon^{(1)}(d_{\mathcal{F}}(\mu_{n}^{*}, \mu), \mu)\right) .
\end{align*}
\par We show that~$d_{\mathcal{F}}(\hat{\mu}_{n}, \mu)$ and $d_{\mathcal{F}}(\mu_{n}^{*}, \mu)$ are of order $O_{\mathbb{P}_{\nu}}(n^{-1/2}).$ Theorem~\ref{5.9} guarantees that
$$ \sqrt{n} d_{\mathcal{F}}(\hat{\mu}_{n}, \mu) \xrightarrow{L} \sup_{f \in \mathcal{F}} |G(f)|, \text{\;\;as\;} n \rightarrow \infty,$$
where $G$ is gaussian process whose sample paths are bounded and uniformly continuous with respect to the metric $L_{2}(\mu).$
Thus, $d_{\mathcal{F}}(\hat{\mu}_{n}, \mu) = O_{\mathbb{P}_{\nu}}(n^{-1/2}).$ 
\par Next, observe that
$$ d_{\mathcal{F}}(\mu_{n}^{*}, \mu) \leq d_{\mathcal{F}}(\mu_{n}^{*}, \hat{\mu}_{n}) + d_{\mathcal{F}}(\hat{\mu}_{n}, \mu).$$
From the~Theorem~\ref{uniformclt} we conclude that
$$ \sqrt{n} d_{\mathcal{F}}(\mu_{n}^{*}, \hat{\mu}_{n}) \xrightarrow{L^{*}} \sup_{f \in \mathcal{F}} |G(f)|, \text{\;\;as\;} n \rightarrow \infty.$$
Thus,\;$d_{\mathcal{F}}(\mu_{n}^{*}, \hat{\mu}_{n}) = O_{P^{*}}\left(n^{-1/2}\right).$
\begin{remark}
Note that~$G$ and~$G_{\mu}$ are not the same gaussian processes!
\end{remark}
We show that $d_{\mathcal{F}}(\mu_{n}^{*}, \hat{\mu}_{n}) = O_{\mathbb{P}_{\nu}}\left(n^{-1/2}\right).$ Indeed, consider the sequence~$S_{n}$ of~order~$O_{P^{*}}(1)$ in~$\mathbb{P}_{\nu}-$probability along the sample, i.e. $$ \lim_{T \rightarrow \infty} \limsup_{n \rightarrow \infty} \mathbb{P}^{*} \{|S_{n}| \geq T \} \rightarrow 0 \text{\;\; in\;} \mathbb{P}_{\nu}- \text{probability along the sample.}$$ Then,
\begin{align*}
 \lim_{T \rightarrow \infty} \limsup_{n \rightarrow \infty} \mathbb{P}_{\nu} \{|S_{n}| \geq T \}  &=  \lim_{T \rightarrow \infty} \limsup_{n \rightarrow \infty} \mathbb{E}_{\nu} \left[ \mathbb{P}^{*} \{ |S_{n}| \geq T \} \right] \\
& \leq \lim_{T \rightarrow \infty} \mathbb{E}_{\nu} \left[ \limsup_{n \rightarrow \infty} \mathbb{P}^{*} \{ |S_{n}| \geq T \}\right] \\ &= \mathbb{E}_{\nu} \left[ \lim_{T \rightarrow \infty} \limsup_{ n \rightarrow \infty} \mathbb{P}^{*} \{ |S_{n}| \geq T \} \right] = 0
\end{align*}
by the dominated convergence theorem and the~Fatou's lemma. Thus, $d_{\mathcal{F}}(\mu_{n}^{*}, \hat{\mu}_{n}) = O_{\mathbb{P}_{\nu}}(n^{-1/2})$ and $d_{\mathcal{F}}(\mu_{n}^{*}, \mu) =  O_{\mathbb{P}_{\nu}}(n^{-1/2}).$
\par Next, we scale~\eqref{eq:ffunctional} by~$\sqrt{n}$:
$$\sqrt{n}(T(\hat{\mu}_{n}) - T(\mu)) = \sqrt{n}(DT_{\mu}(\hat{\mu}_{n} - \mu)) + o_{\mathbb{P}_{\nu}}(1)$$  
and apply Theorem~\ref{uniformclt}. Observe that the linear part in the above equation is gaussian as long as $ 0 < \mathbb{E}_{\mu}T^{(1)}(X_{i}, \mu)^{2} \leq C_{1}^{2}(\mu) \mathbb{E}_{\mu} F^{2}(X)< \infty$ (see Barbe and Bertail (1995), chapter I for details), but that assumption is of course fulfilled since~$\mathcal{F}$ is uniformly bounded. Thus, the following weak convergence in~$l^{\infty}(\mathcal{F})$ holds:
\begin{align*}
\sqrt{n}(T(\hat{\mu}_{n}) - T(\mu)) &= \sqrt{n}(DT_{\mu}(\hat{\mu}_{n} - \mu)) + o_{\mathbb{P}_{\nu}}(1)  \\ &= \sqrt{n} \int_{E} T^{(1)}(x,\mu) (\hat{\mu}_{n} - \mu) d(x)  \\ &= \sqrt{n} \left[ \frac{1}{\hat{n}_{\mathcal{A}_{\mathcal{M}}}} \sum_{i=1}^{\hat{n}_{\mathcal{A}_{\mathcal{M}}}} T^{(1)}(X_{i}, \mu) - 0\right] + o_{\mathbb{P}_{\nu}}(1) \xrightarrow{L} DT_{\mu} G_{\mu}.
\end{align*}
By the previous discussion, we also have 
\begin{align*}
\sqrt{n}(T(\mu_{n}^{*} - T(\mu))) &= \sqrt{n}(DT_{\mu}(T(\mu_{n}^{*} - \mu)) + o_{\mathbb{P}_{\nu}}(1) 
\\ &= \sqrt{n} \int_{E} T^{(1)}(x,\mu) (\mu_{n}^{*} - \mu) d(x) \\ &= \sqrt{n} \left[ \frac{1}{n^{*}_{\mathcal{A}_{\mathcal{M}}}} \sum_{i=1}^{n^{*}_{\mathcal{A}_{\mathcal{M}}}} T^{(1)}(X^{*}_{i}, \mu) - 0\right] + o_{\mathbb{P}_{\nu}}(1).
\end{align*}
The above convergences yield 
\begin{align*} \sqrt{n} [T(\mu_{n}^{*}) - T(\hat{\mu}_{n})] &= \sqrt{n}\left[ \frac{1}{n^{*}_{\mathcal{A}_{\mathcal{M}}}} \sum_{i=1}^{n^{*}_{\mathcal{A}_{\mathcal{M}}}} T^{(1)}(x^{*}_{i}, \mu) - \frac{1}{\hat{n}_{\mathcal{A}_{\mathcal{M}}}} \sum_{i=1}^{\hat{n}_{\mathcal{A}_{\mathcal{M}}}} T^{(1)}(x_{i}, \mu)\right] + o_{\mathbb{P}_{\nu}}(1)  \\ &  \xrightarrow{L} DT_{\mu} G_{\mu}
\end{align*}
and this completes the proof.
\end{proof}
\par Theorem \ref{BootMark} can be easily generalized to the~case when~$\mathcal{F}$ is unbounded and has the envelope in~$L_{2}(\mu)$.
\begin{theorem}\label{BootMarkun}
Let $\mathcal{F}$ be~a~permissible class of functions such that the~envelope~$F$ satisfies
\begin{equation} \mathbb{E}_{A_{\mathcal{M}}} \left[ \sum_{\tau_{A_{\mathcal{M}}} < j \leq \tau_{A_{\mathcal{M}}}(2)} F(X_{j})\right]^{2 + \gamma} < \infty, \;\; \gamma > 0\;(\text{fixed}).
\end{equation}
Suppose, that the~following uniformity condition holds
\begin{equation}
\int_{0}^{\infty} \sqrt{\log N_{2}(\epsilon, \mathcal{F})} d\epsilon < \infty.
\end{equation} Assume further that the conditions of Theorem~\ref{uniformbootun} hold and that $T: \mathcal{P} \rightarrow \mathbb{R}$ is Fr\'{e}chet differentiable functional at~$\mu.$ Then, in general Harris positive recurrent case, we have that $ n^{1/2} (T(\mu_{n}^{*}) - T(\hat{\mu}_{n})) $ converges weakly in~$l^{\infty}(\mathcal{F})$ to~a~gaussian process~$G_{\mu}$ indexed by~$\mathcal{F},$ whose sample paths are bounded and uniformly continuous with respect to the metric~$L_{2}(\mu).$
\end{theorem}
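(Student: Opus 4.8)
The plan is to run the proof of Theorem~\ref{BootMark} essentially verbatim, substituting the three places where the uniform boundedness of $\mathcal{F}$ was invoked by their unbounded-envelope counterparts: Levental's Theorem~\ref{5.9} is replaced by Tsai (1998)'s uniform CLT (valid under the $L_{2}(\mu)$-envelope condition plus the uniform entropy integral), Theorem~\ref{uniformclt} is replaced by Theorem~\ref{uniformbootun}, and the automatic finiteness of the limiting variance must be re-derived from \eqref{eq:encon}. As before, I would first normalise $\mathbb{E}_{\mu}T^{(1)}(x,\mu)=0$, write the Fr\'echet expansions \eqref{eq:ffunctional} and \eqref{eq:dfunctional} at $\mu$ for $T(\hat{\mu}_{n})$ and $T(\mu_{n}^{*})$, and subtract them to obtain
\begin{align*}
\sqrt{n}\bigl(T(\mu_{n}^{*})-T(\hat{\mu}_{n})\bigr) &= \sqrt{n}\,DT_{\mu}(\mu_{n}^{*}-\hat{\mu}_{n}) + \sqrt{n}\,d_{\mathcal{F}}(\mu_{n}^{*},\mu)\,\epsilon^{(1)}\bigl(d_{\mathcal{F}}(\mu_{n}^{*},\mu),\mu\bigr) \\
&\quad - \sqrt{n}\,d_{\mathcal{F}}(\hat{\mu}_{n},\mu)\,\epsilon^{(1)}\bigl(d_{\mathcal{F}}(\hat{\mu}_{n},\mu),\mu\bigr),
\end{align*}
so the task splits into (i) showing the two remainder terms are $o_{\mathbb{P}_{\nu}}(1)$ and (ii) identifying the weak limit of the linear term.

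For step (i) I would invoke Tsai (1998)'s uniform CLT to get $\sqrt{n}\,d_{\mathcal{F}}(\hat{\mu}_{n},\mu)\xrightarrow{L}\sup_{f\in\mathcal{F}}|G(f)|$, hence $d_{\mathcal{F}}(\hat{\mu}_{n},\mu)=O_{\mathbb{P}_{\nu}}(n^{-1/2})$, and Theorem~\ref{uniformbootun} to get $\sqrt{n}\,d_{\mathcal{F}}(\mu_{n}^{*},\hat{\mu}_{n})\xrightarrow{L^{*}}\sup_{f\in\mathcal{F}}|G(f)|$ in $\mathbb{P}_{\nu}$-probability along the sample, i.e. $d_{\mathcal{F}}(\mu_{n}^{*},\hat{\mu}_{n})=O_{P^{*}}(n^{-1/2})$. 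The dominated-convergence-plus-Fatou argument already displayed in the proof of Theorem~\ref{BootMark} upgrades this to $d_{\mathcal{F}}(\mu_{n}^{*},\hat{\mu}_{n})=O_{\mathbb{P}_{\nu}}(n^{-1/2})$, and the triangle inequality then gives $d_{\mathcal{F}}(\mu_{n}^{*},\mu)=O_{\mathbb{P}_{\nu}}(n^{-1/2})$. Continuity of $\epsilon^{(1)}(\cdot,\mu)$ at $0$ with $\epsilon^{(1)}(0,\mu)=0$ forces both remainder terms above, scaled by $\sqrt{n}$, to be $o_{\mathbb{P}_{\nu}}(1)$.

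For step (ii) the one genuinely new point appears: in the bounded case the linear part is automatically a well-defined Gaussian because $\mathbb{E}_{\mu}T^{(1)}(X,\mu)^{2}\le C_{1}^{2}(\mu)\,\mathbb{E}_{\mu}F^{2}(X)<\infty$ trivially, whereas here $F$ is unbounded. I would note that the envelope hypothesis \eqref{eq:encon} is a $(2+\gamma)$-th moment bound on the block-sum $\sum_{\tau_{A_{\mathcal{M}}}<j\le\tau_{A_{\mathcal{M}}}(2)}F(X_{j})$, which via Kac's theorem yields $\mathbb{E}_{\mu}F^{2}(X)<\infty$, hence $0<\mathbb{E}_{\mu}T^{(1)}(X,\mu)^{2}<\infty$ (Barbe and Bertail (1995), Ch.~I), so that $DT_{\mu}G_{\mu}$ is again a bona fide Gaussian process indexed by $\mathcal{F}$ with bounded, $L_{2}(\mu)$-uniformly continuous paths. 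Writing $\sqrt{n}\,DT_{\mu}(\hat{\mu}_{n}-\mu)=\frac{\sqrt{n}}{\hat{n}_{A_{\mathcal{M}}}}\sum_{i=1}^{\hat{n}_{A_{\mathcal{M}}}}T^{(1)}(X_{i},\mu)$ and the analogous bootstrap expression, Tsai (1998)'s CLT and Theorem~\ref{uniformbootun} give
$$\sqrt{n}\,DT_{\mu}(\hat{\mu}_{n}-\mu)\xrightarrow{L}DT_{\mu}G_{\mu},\qquad \sqrt{n}\,DT_{\mu}(\mu_{n}^{*}-\hat{\mu}_{n})\xrightarrow{L^{*}}DT_{\mu}G_{\mu}$$
in $\mathbb{P}_{\nu}$-probability, and combining with step (i) yields $\sqrt{n}(T(\mu_{n}^{*})-T(\hat{\mu}_{n}))\xrightarrow{L}DT_{\mu}G_{\mu}$, which is the assertion.

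The main obstacle is entirely inside step (ii): one must verify that the single hypothesis \eqref{eq:encon} is simultaneously strong enough to license Tsai's uniform CLT for the original chain, Theorem~\ref{uniformbootun} for the bootstrapped chain, \emph{and} the finiteness of $\mathbb{E}_{\mu}F^{2}$ needed for the limiting variance — in other words, the delicate part is the bookkeeping that passes between block-moment conditions and $\mu$-moment conditions through Kac's theorem, together with checking that the negligibility of the Fr\'echet remainder (immediate in the bounded case) survives when the envelope is only square-integrable. Everything else is a word-for-word repetition of the proof of Theorem~\ref{BootMark}.
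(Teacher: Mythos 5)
Your proposal is correct and takes essentially the same route as the paper, whose own proof of this theorem consists precisely of the instruction to repeat the proof of Theorem~\ref{BootMark} line by line with Tsai (1998) substituted for Levental (1988) and Theorem~\ref{uniformbootun} substituted for Theorem~\ref{uniformclt}. You in fact supply one detail the paper leaves implicit, namely deducing $\mathbb{E}_{\mu}F^{2}(X)<\infty$ from the block-moment condition on the envelope via Kac's theorem so that the limiting variance of the linear term stays finite; this is a faithful execution of the paper's ``line by line'' prescription rather than a departure from it.
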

The proof of Theorem \ref{BootMarkun} follows analogously to the proof of~Theorem~\ref{BootMark}. Apply the results of Tsai (1998) and Theorem~\ref{uniformbootun} instead of~Levental's (1988) and Theorem~\ref{uniformclt} to control the remainder terms. Then, the reasoning goes line by line as in the proof of Theorem~\ref{BootMark}.
\begin{remark}
In particular, Theorem \ref{BootMarkun} is also true in the regenerative case.  Replace $\hat{\mu}_{n}$ and $A_{\mathcal{M}}$ by $\mu_{n}$ and $A.$ The proof goes analogously as in the preceding theorems.
\end{remark}
\section{Conclusion}
In this paper, we have shown how the regenerative properties of Markov chains can generalize some concepts in nonparametric statistics from i.i.d. to dependent case. We have shown that uniform bootstrap functional central limit theorem holds over permissible, uniformly bounded classes of functions. We have  proved that the uniform boundedness assumption imposed on~$\mathcal{F}$ can be weakened and it is feasible to require that~$\mathcal{F}$ has an~envelope~in~$L_{2}(\mu).$ We have worked with Markov chains on the general state space, but our results can be directly applied to~Markov chains on~countable state space. Thus, some proofs of the already existing results for the countable case, can be simplified when just applying the methodology introduced in~this paper.
\par The bootstrap asymptotic results for empirical processes indexed by $\mathcal{F}$ naturally lead to bootstrap central limit theorems for~Fr\'{e}chet differentiable functionals.  We have shown that bootstrap uniform CLTs hold in~the~bounded and the~unbounded case over~$\mathcal{F}.$ Similar approach can be also applied to Hadamard differentiable functionals in order to establish analogous asymptotic results to~presented in this paper.
\section*{Acknowledgement}
I would like to thank my advisor, Patrice Bertail, for insightful remarks, inspiring discussions and guidance when I was working on this paper.
\section*{Appendix}
In the small Appendix section we give the short proof of the~Proposition~\ref{radul} which was formulated in~Radulovi\'{c} (2004). We feel the need to provide a~short explanation that this interesting property holds for~any~random variables with~finite second moments. For the~reader's convenience we recall the~Proposition~\ref{radul} below.
\begin{proposition}
For any random variable $W,$ such that $\mathbb{E} W^{2} < \infty,$ there exists a~positive increasing function~$\phi: \mathbb{R}^{+} \rightarrow \mathbb{R}^{+}$ such that
$$ \lim_{x \rightarrow \infty} \frac{\phi(x)}{x^{2}} = + \infty \text{\;\;and\;\;} \mathbb{E} \phi(W) < \infty.$$
\end{proposition}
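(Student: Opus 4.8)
The plan is to invoke the de la Vallée--Poussin construction, reduced to the square of $W$. We may assume $W\ge 0$ (otherwise work with $|W|$, which leaves $\mathbb{E}W^{2}$ unchanged), and set $V:=W^{2}$, so that $\mathbb{E}V=\mathbb{E}W^{2}<\infty$. It then suffices to produce a nondecreasing function $\psi:\mathbb{R}^{+}\to\mathbb{R}^{+}$ with $\psi(t)/t\to\infty$ as $t\to\infty$ and $\mathbb{E}\psi(V)<\infty$: taking $\phi(x):=\psi(x^{2})$ (or $\psi(x^{2})+1$ if one insists on strict positivity at the origin) gives an increasing function with $\phi(x)/x^{2}=\psi(x^{2})/x^{2}\to\infty$ and $\mathbb{E}\phi(W)=\mathbb{E}\psi(W^{2})=\mathbb{E}\psi(V)<\infty$, which is exactly what is claimed.

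To build $\psi$, I would write $G(t):=\mathbb{P}(V>t)$, so that $\mathbb{E}V=\int_{0}^{\infty}G(t)\,dt<\infty$ and in particular $\int_{a}^{\infty}G(t)\,dt\to0$ as $a\to\infty$. Choose an increasing sequence of reals $0=a_{0}<a_{1}<a_{2}<\cdots$ with $a_{k}\uparrow\infty$ and $\int_{a_{k}}^{\infty}G(t)\,dt\le 2^{-k}$ for all $k\ge1$. Define the step function $g(s):=k$ for $s\in[a_{k},a_{k+1})$ and set $\psi(t):=\int_{0}^{t}\bigl(g(s)+1\bigr)\,ds$. Then $\psi$ is continuous, strictly increasing, and convex, since $g+1$ is nonnegative and nondecreasing.

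The two required properties then follow by routine estimates. For the growth, monotonicity of $g$ gives $\psi(t)/t\ge \tfrac1t\int_{t/2}^{t}\bigl(g(s)+1\bigr)\,ds\ge \tfrac12\bigl(g(t/2)+1\bigr)\to\infty$ because $g(s)\to\infty$. For integrability, Tonelli's theorem (the layer-cake formula) yields
\[
\mathbb{E}\psi(V)=\int_{0}^{\infty}\bigl(g(s)+1\bigr)G(s)\,ds
=\mathbb{E}V+\sum_{k\ge0}k\!\!\int_{a_{k}}^{a_{k+1}}\!\!G(s)\,ds
\le \mathbb{E}V+\sum_{k\ge0}k\,2^{-k}<\infty .
\]
Combined with the reduction of the first paragraph, this completes the proof.

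There is no genuine obstacle here, since this is a standard uniform-integrability fact; the only point that needs a little care is the simultaneous control in the last display: the sequence $(a_{k})$ must be chosen so that $g$ grows fast enough to force $\psi(t)/t\to\infty$, yet slowly enough relative to the decay of $G$ that $\sum_{k}k\int_{a_{k}}^{a_{k+1}}G(s)\,ds<\infty$, and the geometric bound $\int_{a_{k}}^{\infty}G\le 2^{-k}$ is precisely what makes both hold at once. One also notes that $\psi$, and hence $\phi$, is automatically continuous and measurable, and could be taken $C^{\infty}$ by mollifying $g$, although this refinement is not needed for the applications in Sections~\ref{main} and~\ref{frech}.
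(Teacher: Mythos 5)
Your proof is correct, but it takes a genuinely different route from the one in the paper's appendix. The paper works directly with $W$: writing $\epsilon(x)=x^{2}\bar{F}(x)$ for the (suitably interpreted) tail function $\bar{F}$ of $W$, it uses $\mathbb{E}W^{2}<\infty$ to get $\epsilon(x)\to0$ and then simply sets $\phi(x)=x^{2}/\epsilon(x)$, i.e.\ $\phi$ is the reciprocal of the tail probability, verifying integrability by a change-of-variables computation with $\bar{F}(dx)$. You instead reduce to $V=W^{2}$ and run the classical de la Vall\'ee--Poussin construction: an integrated step function $\psi(t)=\int_{0}^{t}(g(s)+1)\,ds$ whose slope $g$ is calibrated against the tail integrals via $\int_{a_{k}}^{\infty}\mathbb{P}(V>t)\,dt\le2^{-k}$, followed by $\phi(x)=\psi(x^{2})$. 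The difference matters exactly at the point you single out as delicate: the paper's choice ties the growth of $\phi$ rigidly to the tail decay, and the resulting quantity $\mathbb{E}[1/\bar{F}(W)]$ is not finite in general (for a continuous $W$ the probability integral transform makes $\bar{F}(W)$ uniform on $(0,1)$, whose reciprocal has infinite mean; concretely, $\bar{F}(x)=x^{-3}$ gives $\phi(x)=x^{3}$ and $\mathbb{E}W^{3}=\infty$), and the paper's displayed limit $\lim_{x\to\infty}\phi(x)/x^{2}=0$ contradicts the statement being proved, so that argument should be read only as a sketch. Your free choice of the sequence $(a_{k})$ decouples the rate at which $\psi(t)/t\to\infty$ from the rate of tail decay, which is precisely what makes both the growth condition and $\mathbb{E}\psi(V)<\infty$ hold simultaneously; as a bonus your $\phi$ is convex and continuous. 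In short, your proof is the standard, fully rigorous argument, and it buys a correctness guarantee that the paper's more direct construction does not.
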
 
\begin{proof}
Consider some positive, increasing function~$\bar{F}(x)$ such that 
$$ \epsilon(x) = \lim_{x \rightarrow \infty} x^{2} \bar{F} (x) = 0.$$ 
\par Firstly, we consider the case, when~$W$ has bounded support. Let~$f$ be a probability density function of~$W.$
For some sufficiently large $x_{0},$ we put
\begin{align*}
\phi(x) &= \left\{ \begin{array}{ll}
\frac{x^{2}}{\epsilon(x)} & \textrm{if $\epsilon(x) \neq 0$}\\
0 & \textrm{else}\\
\end{array} \right. \\
& = \left\{ \begin{array}{ll}
\frac{1}{1-\bar{F}(x)} & \textrm{if $\epsilon(x) \neq 0$}\\
1 & \textrm{else.}\\
\end{array} \right.
\end{align*}
Then, we have 
$$ \lim_{x \rightarrow \infty} \frac{\phi(x)}{x^{2}} = 0 $$
and
\begin{align*}
\int_{x_{0}}^{\infty} \frac{x^{2}}{\epsilon(x)} f(x) dx &= \int_{x_{0}}^{\infty} \phi(x) \bar{F}(dx)  \\
& = \int_{x_{0}}^{\infty} \frac{\bar{F}(dx)}{1-\bar{F}(x)} dx < \infty.
\end{align*}
For the unbounded support case, just put 
for some sufficiently large $x_{0}$ :
\begin{align*}
\phi(x) &= \left\{ \begin{array}{ll}
\frac{x^{2}}{\epsilon(x)} & \textrm{if $\epsilon(x) \neq 0$}\\
C & \textrm{else}\\
\end{array} \right. 
\end{align*}
for some $C > 0,$ then the reasoning is going analogously as in the bounded support case.
\end{proof}

\end{document}